\numberwithin{equation}{section} 
\numberwithin{figure}{section} 
  \theoremstyle{plain}
  \newtheorem*{thm*}{Theorem}
  \theoremstyle{plain}
  \newtheorem{thm}{Theorem}[section]
  \theoremstyle{definition}
  \newtheorem{defn}[thm]{Definition}
  \theoremstyle{plain}
  \newtheorem{lem}[thm]{Lemma}
  \theoremstyle{plain}
  \newtheorem{prop}[thm]{Proposition}
  \theoremstyle{remark}
  \theoremstyle{remark}
  \newtheorem*{acknowledgement*}{Acknowledgement}
\begin{document}

\title[A setting for higher order differential equation fields and
higher order Lagrange and Finsler spaces] {A setting for higher
  order differential equation fields and
higher order Lagrange and Finsler spaces}

\author[Bucataru]{Ioan Bucataru}
\address{Ioan Bucataru, Faculty of Mathematics, University
  Alexandru Ioan Cuza, Ia\c si, 700506, Romania}
\urladdr{http://www.math.uaic.ro/\textasciitilde{}bucataru/}

\date{\today}

\begin{abstract}
We use the Fr\"olicher-Nijenhuis formalism to reformulate the inverse
problem of the calculus of variations for a system of differential
equations of order $2k$ in terms of a semi-basic $1$-form of order
$k$.  Within this general context, we use the homogeneity proposed by
Crampin and Saunders in \cite{CS11} to formulate and discuss the
projective metrizability problem for higher order differential
equation fields. We provide necessary and sufficient conditions for
higher order projective metrizability in terms of homogeneous
semi-basic $1$-forms. Such a semi-basic $1$-form is the
Poincar\'e-Cartan $1$-form of a higher order Finsler function, while
the potential of such semi-basic $1$-form is a higher order Finsler function.  
\end{abstract}

\subjclass[2000]{34A26, 53C60, 70H03, 70H50}

\keywords{Ordinary differential equations, homogeneity, projective
  metrizability, higher order Finsler spaces}

\maketitle

\section{Introduction}

The Fr\"olicher-Nijenhuis formalism is a very useful tool for developing a differential
calculus that provides a geometric setting for studying differential equations
fields, \cite{BCD11, BD09, CSC86, GM00, deleon85, saunders02, szilasi03}. 

The framework for studying higher order differential equation fields, on a
configuration manifold $M$, is the higher order tangent bundle $T^rM$, for some
natural number $r\geq 1$. In Section \ref{sec:vdc} we discuss some geometric
structures that naturally live on higher order tangent bundles:
vertical distributions, Liouville vector fields, tangent
structures. We use the Fr\"olicher-Nijenhuis formalism associated to these
geometric structures to provide a vertical differential calculus,
which is very useful for studying higher order differential equation fields. Motivated
by the foliated structure of the higher order tangent bundles, we show
that vertical vector fields, as well as their dual, semi-basic $1$-forms,
play an important role in the vertical differential calculus, which we
associate to a higher order differential equations field. We will use the formalism developed in Subsection
\ref{subsec:hodef} and especially Lemma \ref{lem:cas} in
Sections \ref{sec:ls} and \ref{sec:pm} to characterize those
differential equation fields that may be associated to a variational
problem of a Lagrange or a Finsler space of higher order. 

The inverse problem of the calculus of variations requires to
determine the necessary and sufficient conditions such that a system
of ordinary differential equations, of order $2k$, may be derived from a
variational problem. For $k=1$, these conditions can be formulated in terms of
a multiplier matrix \cite{krupkova97, morandi90, PR11, sarlet82,
  saunders89}, a closed $2$-form \cite{AT92, crampin81}, or a
semi-basic $1$-form \cite{BD09}. The approach, based on the existence
of a closed $2$-form, developed by Crampin in \cite{crampin81}, was
extended by de Le\'on and Rodrigues in \cite{deLeon92} for $k>1$. A
deep relationship between variational equations of arbitrary order
and closed $2$-forms has been found and studied by Krupkov\'a in
\cite{krupkova86, krupkova87}. In
Section \ref{sec:ls} we use the vertical differential calculus, which we develop in Section \ref{sec:vdc}, to
provide global formulations for the geometric structures one can
associate to higher order Lagrangians and higher order differential
equation fields. In Theorem \ref{thm:ls} we reformulate
the inverse problem of the calculus of variations in terms of a
semi-basic $1$-form of order $k$. For the variational case, we show
that such a semi-basic $1$-form is the Poincar\'e-Cartan $1$-form of a
Lagrangian of order $k$. In Proposition \ref{prop:homls} we prove that
some homogeneity properties of a regular Lagrangian transfer to its
canonical Euler-Lagrange vector field. 

An important case of the inverse problem of the calculus
of variations refers to homogeneous systems of ordinary differential equations. For $k=1$,
this problem contains what is known as the projective metrizability problem or ''the
Finslerian version of Hilbert's fourth problem'',
\cite{alvarez05, crampin08, cms12, saunders12, szilasi07}. The projective metrizability problem requires to
determine if the solutions of a homogeneous system of
second order ordinary differential equations coincide with the
geodesics of a Finsler metric, up to an orientation preserving
reparameterization \cite{BM11a, BM11b, shen01, szilasi02}. For the case $k>1$, an attempt to address and
study the projective metrizability problem, requires first a good definition
of homogeneity for systems of higher order differential equations as
well as for higher order Lagrangians. In this work we use the
definitions of homogeneity proposed by Crampin and Saunders in
\cite{CS11} to formulate and study the projective metrizability
problem in Section \ref{sec:pm}. In Subsection \ref{subsec:hofs} we
introduce and discuss higher order Finsler spaces. We show that the
regularity condition, which we propose for a higher order Finsler function, is
equivalent to the regularity condition proposed by Crampin and
Saunders in \cite{CS11} for parametric Lagrangians and that it reduces, when
$k=1$, to the classic regularity condition of a Finsler function.  
We show that the variational problem of a higher order Finsler
function uniquely determines a projective class of homogeneous
differential equation fields. In Theorem \ref{thm:pm} we
characterize the projective metrizability problem of a homogeneous
differential equation field of order $2k$ in terms of a
homogeneous semi-basic $1$-form of order $k$. We prove that, similarly
with what happens in
the case $k=1$, such a semi-basic $1$-form is the Poincar\'e-Cartan
$1$-form of a Finsler function of order $k$. Moreover, the potential of such homogeneous semi-basic $1$-form
is a Finsler function of order $k$ that metricizes the equation
field. 

In the last section we discuss some examples of higher order
differential equation fields and their relations with higher order
Lagrange and Finsler spaces. It has been shown in \cite{CMOP06} that
biharmonic curves, which are solutions of a fourth order differential
equations field, are solutions of the Euler-Lagrange equations for a
regular Lagrangian $L_2$ of order $2$. See also \cite{BCD11} for a
different approach. We use the homogeneity properties of the second
order Lagrangian $L_2$ to obtain some information for the
corresponding Euler-Lagrange vector field (biharmonic differential
equations field).  We provide an example of a second order Finsler functions, which in the Euclidian context reduces to the
parametric Lagrangian $L$, studied by Crampin and Saunders in
\cite[\S 6]{CS11}.

\section{Vertical differential calculus on higher order tangent
  bundles} \label{sec:vdc}

In this section we discuss first some geometric structures that
are naturally defined on higher order tangent spaces: vertical
distributions, Liouville vector fields, tangent structures, higher
order differential equation fields. We use these
geometric structures and the corresponding differential calculus induced
by the Fr\"olicher-Nijenhuis formalism to develop a
geometric setting, which we will use in Sections \ref{sec:ls} and
\ref{sec:pm} to discuss two important problems associated to a (homogeneous) higher order
differential equation field. 

\subsection{Geometric structures on higher order tangent bundles}

In this work $M$ is a real, $n$-dimensional and $C^{\infty}$-smooth manifold. We
will assume that all objects are smooth where
defined. We denote the ring of smooth functions on $M$ by
$C^{\infty}(M)$, while the Lie algebra of vector fields on $M$ is denoted by
$\mathfrak{X}(M)$.  

The framework to develop a geometric setting for studying systems of higher order ordinary
differential equations on a manifold $M$ is the higher order tangent bundle
$T^{r}M=J_{0}^{r}M$, for some $r \in\mathbb{N}^*$, \cite{AT92, CSC86, deleon85, miron94,
miron97, tulczyjew76}. This is the jet bundle of order $r$, of
curves $c$ from a neighborhood of $0$ in $\mathbb{R}$ to $M$. For a
curve $c:I \to M$, $c(t)=(x^i(t))$, consider $j^rc: I \to T^rM$
its jet lift of order $r$. If $(x^{i})$ are local coordinates on $M$, the induced
local coordinates on $T^rM$ are denoted by
$(x^{i},y^{(1)i},\cdots,y^{(r)i})$, where
\[
y^{(\alpha)i}(j_{0}^{r}c)=\left.\frac{1}{\alpha!}\frac{d^{\alpha}(x^{i}(c(t))}{dt^{\alpha}}\right\vert
_{t=0},\quad \alpha\in\{1,..,r\} .\] Let $y^{(0)i}:=x^i$ and 
denote $T^0M=M$. The canonical submersion $\pi^r_{\alpha}: T^rM \to
T^{\alpha}M$, for each $\alpha \in \{0,1,...,r-1\}$,  induces a
natural foliation of $T^rM$. We will consider also the subbundle
$T_0^rM=\{(x,y^{(1)},\cdots,y^{(r)})\in T^rM,
y^{(1)}\neq 0\}$. It follows that $T_0^rM =
\left(\pi^r_1\right)^{-1}\left(T_0M\right)$.

A curve $c:I \to M$ is called a \emph{regular
curve} if $j^rc(t) \in T^r_0M$ for all $t\in I$ and some $r \in\mathbb{N}^*$.

The \emph{tangent structure} (or vertical endomorphism) of order
$r$ is the $(1,1)$-type tensor field on $T^{r}M$ defined as
\begin{equation} J=\frac{\partial}{\partial y^{(1)i}}\otimes
dx^{i}+\frac{\partial}{\partial y^{(2)i}}\otimes
dy^{(1)i}+\cdots+\frac{\partial}{\partial y^{(r)i}}\otimes
dy^{(r-1)i}.\label{eq:J}\end{equation} For each $\alpha\geq 2$, we will consider $J^{\alpha}$, the composition of $J$,
$\alpha$-times. The following
properties are straightforward: $J^{r+1}=0,\,\,
\operatorname{Im}J^{\alpha}=\operatorname{Ker}J^{r-\alpha+1},
\alpha\in\{1,...,r\}$.

The foliated structure of $T^{r}M$ gives rise to $r$ regular vertical
distributions
\begin{eqnarray*}
V_{\alpha}(u)  =\operatorname{Ker} D_{u}\pi_{\alpha-1}^{r}
=\operatorname{Im} J_u^{\alpha} =\operatorname{Ker}
J_u^{r-\alpha+1}, \textrm{ for } u\in T^rM,  \alpha
\in\{1,...,r\}.\end{eqnarray*} 
Each distribution $V_{\alpha}$, for $\alpha\in\{1,...,r\},$ is tangent to the fibers of
$\pi_{\alpha-1}^{r}: (x^{i},y^{(1)i},\cdots,y^{(r)i})
\rightarrow(x^{i},y^{(1)i},\cdots,y^{(\alpha-1)i})$, and hence it
is integrable. We have that $\dim V_{\alpha}=(r-\alpha+1)n$,
$\alpha\in\{1,...,r\}$ and $V_{r}(u)\subset
V_{r-1}(u)\subset\cdots\subset V_{1}(u),$ for each $u\in T^rM$. We
will denote by $\mathfrak{X}^{V_{\alpha}}(T^rM)$ the Lie subalgebra of
vertically valued vector fields. 

An important set of vertical vector fields is provided by the
\emph{Liouville vector fields} (or dilation vector fields) $\mathbb{C}_{\alpha}\in \mathfrak{X}^{V_{\alpha}}(T^rM)$,
$\alpha \in \{1,..., r\}$. These vector fields are locally given by:
\begin{eqnarray} \mathbb{C}_{\alpha} =
  y^{(1)i}\frac{\partial}{\partial y^{(\alpha)i}} + 2y^{(2)i}
    \frac{\partial}{\partial y^{(\alpha+1)i}} + \cdots +
      (r+1-\alpha)y^{(r+1-\alpha)i}\frac{\partial}{\partial
        y^{(r)i}}, \ \alpha \in \{1,...,
      r\}. \label{calpha} \end{eqnarray}
For the Liouville vector fields, we have the following formulae for
their Lie brackets
\begin{equation}
\begin{array}{lcl}
 [ \mathbb{C}_{\alpha}, \mathbb{C}_{\beta}] & =  & \left\{ 
\begin{array}{ll} (\alpha -\beta) \mathbb{C}_{\alpha+\beta-1}, &
  \textrm{if } \alpha +\beta \leq r-1, \\
0,  & \textrm{otherwise}. \end{array}  \right.
\end{array} \label{cacb}
\end{equation}

We will make use of the Fr\"olicher-Nijenhuis formalism, 
\cite{frolicher56, GM00, KMS93}, to develop a
differential calculus that will be useful to address various problems
associated to a differential equation field, \cite{BCD11, BD09, klein68, szilasi03}. For a vector valued
$l$-form $L$  on $T^rM $ consider the derivation of degree $l-1$, $i_L:\Lambda^q(T^rM) \to
\Lambda^{q+l-1}(T^rM)$ and the derivation of degree $l$, $d_L:\Lambda^q(T^rM) \to
\Lambda^{q+l}(T^rM)$. These two
derivations are related by the
following formula
\begin{eqnarray}
d_L=i_L\circ d + (-1)^l d\circ i_L. \label{dil} \end{eqnarray}
For two vector valued forms $K$ and $L$ on $T^rM$, of
degree $k$ and  respectively
$l$, consider the \emph{Fr\"olicher-Nijenhuis bracket} $[K, L]$,
which is the vector valued $(k+l)$-form on $T^rM$,
uniquely defined by
\begin{eqnarray}
d_{[K,L]}= d_K\circ d_L -(-1)^{kl} d_L\circ d_K. \label{dkl} \end{eqnarray} 
The Fr\"olicher-Nijenhuis brackets of the Liouville vector fields
$\mathbb{C}_{\alpha}$ (vector valued $0$-forms) and the vertical
endomorphisms $J^{\beta}$  (vector valued $1$-forms) are the vector
valued $1$-forms given by the
following formulae
\begin{equation}
\begin{array}{lcl}
 [ \mathbb{C}_{\alpha}, J^{\beta}] & =  & \left\{ 
\begin{array}{ll}  -\beta J^{\alpha+\beta-1}, &
  \textrm{if } \alpha +\beta \leq r+1, \\
0,  & \textrm{otherwise}. \end{array}  \right.
\end{array} \label{cajb}
\end{equation}

For $r=1$, semi-basic $1$-forms have shown their usefulness to adress various
problems associated to second order differential equation fields,
\cite{BD09, BM11a, GM00}. We will see also that for $r>1$, semi-basic
$1$-forms, of some order, are useful to formulate a geometric setting
for higher order differential equation fields. These forms where
introduced and discussed in \cite[Def 1]{ALR91}. However, in our work
a semi-basic $1$-form of order $\alpha$ on $T^rM$ corresponds to what
is called in \cite{ALR91} a semi-basic $1$-form of order $r+1-\alpha$.
\begin{defn} \label{defn:semibasic} A form on $T^rM$ is called \emph{semi-basic of order} $\alpha\in
  \{1,...,r\}$ if it is semi-basic with respect to the
  submersion $\pi^r_{\alpha-1}$.
\end{defn}
A form $\theta$ on $T^rM$ is semi-basic of
order $\alpha$ if it vanishes whenever one of its argument is a
vertical vector field in $\mathfrak{X}^{V_{\alpha}}(T^rM)$. Therefore, $\theta\in
\Lambda^1(T^rM)$  is semi-basic of order $\alpha$ if and
only if $i_{J^{\alpha}}\theta=0$. 
Semi-basic $1$-forms of order $\alpha$ are the dual equivalent of vertical
vector fields in $\mathfrak{X}^{V_{\alpha}}(T^rM)$. Hence we have that $\theta\in
\Lambda^1(T^rM)$  is semi-basic of order $\alpha$ if and
only if  there exists $\eta\in \Lambda^1(T^rM)$ such that
$\theta=i_{J^{r-\alpha+1}}\eta = \eta \circ J^{r-\alpha+1}$. 
Locally, a $1$-form $\theta$ on $T^rM$  is semi-basic of order $\alpha$ if and
only if 
\begin{eqnarray}
\theta = \theta_{(0)i}dx^i + \theta_{(1)i}dy^{(1)i} + \cdots
+\theta_{(\alpha-1)i} dy^{(\alpha-1)i}, \label{sba1} \end{eqnarray}
where the $\alpha$ components $\theta_{(0)i}, ..., \theta_{(\alpha-1)i}$ are smooth functions defined on
domains of local charts on $T^rM $.

For a function $f\in C^{\infty}(T^rM)$ and $\alpha \in \{1,...,r\}$ we have that $d_{J^{\alpha}}f$ is a
semi-basic $1$-form of order $r-\alpha+1$.
For a function $f\in C^{\infty}(T^rM)$ we have that $df$ is a
semi-basic $1$-form of order $\alpha \in \{1,...,r\}$ if and only if
$f$ is constant along the fibers of the submersion $\pi^r_{\alpha-1}$
and hence one can restrict it to $T^{\alpha-1}M$.

\subsection{Higher order differential equation fields} \label{subsec:hodef}

A system of higher order differential equations, whose coefficients do
not depend explicitly on time, can be viewed as a special vector field
on some higher order tangent bundle.  For such systems, we will use the
definition for homogeneous differential equation fields of order $r$,
which was proposed by Crampin and Saunders in \cite{CS11}.   

As it happens in the case $r=1$, Liouville vector fields
$\mathbb{C}_{\alpha}$, are
important for defining the notion of homogeneity for various geometric
structures on $T^rM$. Whenever we want to consider homogeneous
structures, which are not necessarily polynomial in the fibre
coordinates, we will consider them 
defined on the subbundle $T^r_0M$. 
\begin{defn} \label{defn:rspray} Consider a vector field $S$ on $T^rM$. We say
    that $S$  is a \emph{semispray of order $r$} if it satisfies the
  condition $JS=\mathbb{C}_1$. \end{defn}

In induced coordinates for
$T^rM$, a semispray of order $r$ is given by
\begin{eqnarray}
S=y^{(1)i}\frac{\partial}{\partial
x^{i}}+2y^{(2)i}\frac{\partial}{\partial
y^{(1)i}}+\cdots+ry^{(r)i}\frac{\partial}{\partial
y^{(r-1)i}}-(r+1)G^{i}\frac{\partial}{\partial
y^{(r)i}},\label{rsemispray}\end{eqnarray} for some functions
$G^i$ defined on domains of induced local charts. 

Alternatively, we have that a vector field $S$ on $T^rM$  is a semispray of order $r$
if and only if  any integral curve of
$S$, $\gamma:I \to T_0^rM$, is of the form
$\gamma=j^r(\pi_0^r\circ \gamma)$. For an integral
curve $\gamma: I \to T^r_0M$ of $S$, we say that curve
$c=\pi_0^r\circ \gamma$ is a \emph{geodesic} of $S$. Therefore, a
regular curve $c:I \to M$ is a geodesic of $S$ if and only if
$S\circ j^rc=(j^rc)'$. Locally, a regular curve $c:I \to M$,
$c(t)=(x^i(t))$, is a geodesic of $S$ if and only if it satisfies
the system of $(r+1)$ order ordinary differential equations
\begin{eqnarray}
\frac{1}{(r+1)!}\frac{d^{r+1}x^i}{dt^{r+1}} +
G^{i}\left(x,\frac{dx}{dt},\cdots,\frac{1}{r!}\frac{d^{r}x}{dt^{r}}\right)=0.\label{rode}
\end{eqnarray}
Therefore semisprays of order $r$ describe systems of higher order
differential equations which have regular curves on $M$ as solutions.

We will consider also, $d_T$, the \emph{Tulczyjew differential
  operator} on $T^rM$, also called the total derivative operator, which is given by \cite{tulczyjew76}
\begin{eqnarray}
d_T=y^{(1)i}\frac{\partial}{\partial
x^{i}}+2y^{(2)i}\frac{\partial}{\partial
y^{(1)i}}+\cdots+ry^{(r)i}\frac{\partial}{\partial
y^{(r-1)i}}. \label{dtr}\end{eqnarray} Using the Tulczyjew operator, a
semispray $S$ of order $r$ can be written as follows
\begin{eqnarray} S=d_T -(r+1)G^{i}\frac{\partial}{\partial
y^{(r)i}}.\label{dtrs}\end{eqnarray} 
Differential operator $d_T$ maps a function $f\in
C^{\infty}(T^{\alpha}M)$, $\alpha\in \{0,..., r-1\}$, into a function
$d_Tf:=d_T(f\circ \pi^k_{\alpha})\in
C^{\infty}(T^{\alpha+1}M)$. The function $d_Tf$ is basic
with respect to the submersion $\pi^k_{\alpha+1}$, therefore we can assume that
it is defined on $T^{\alpha+1}M$ and hence $d_Tf \in
C^{\infty}(T^{\alpha+1}M)$. In view of formula \eqref{dtrs}, for an arbitrary semispray of order $r$, $S\in
\mathfrak{X}(T^rM)$, and a function $f\in
C^{\infty}(T^{\alpha}M)$, $\alpha\in \{0,..., r-1\}$, we have that
$Sf=d_Tf \in C^{\infty}(T^{\alpha+1}M)$. 

The Fr\"olicher-Nijenhuis brackets of an arbitrary semispray $S$ and the vertical
endomorphisms $J^{\alpha}$ are useful to fix a (multi) connection on
$T^rM$ \cite{ALR91, BCD11, CSC86, saunders02}. In this work we will use only the vertical valued components of these vector
valued $1$-forms. 
\begin{lem} \label{lem:cas} Consider $S$ a semispray of order $r$ and
  $\alpha \in \{1,..., r\}$.
\begin{itemize} 
\item[i)] The Lie brackets $[\mathbb{C}_{\alpha}, S]$ are given by
\begin{eqnarray} [\mathbb{C}_{\alpha}, S]=\alpha \mathbb{C}_{\alpha-1}
  + U_{\alpha}, \label{cas}\end{eqnarray}
where $\mathbb{C}_0=S$ and $U_{\alpha}\in \mathfrak{X}^{V_r}(T^rM)$.
\item[ii)] The vertical components of the
  Fr\"olicher-Nijenhuis brackets $[S, J^{\beta}]$ are given by 
\begin{equation}
\begin{array}{lcl}
 J^{\alpha} [S, J^{\beta}] & =  & \left\{ 
\begin{array}{ll}  -\beta J^{\alpha+\beta-1}, &
  \textrm{if } \alpha +\beta \leq r+1, \\
0,  & \textrm{otherwise}. \end{array}  \right.
\end{array} \label{jasjb}
\end{equation}
\item[iii)] For a semi-basic $1$-form $\theta\in \Lambda^1(T^rM)$, of
  order $\alpha$, we have 
\begin{eqnarray}
i_{[S, J^{\beta}]}\theta =-\beta i_{J^{\beta-1}}\theta, \quad \forall
\beta\in \{1,...,r\}. \label{isjbt} \end{eqnarray}
\item[iv)] Consider $\theta\in \Lambda^1(T^rM)$ a semi-basic $1$-form
  of order $\alpha$. Then
  $\mathcal{L}_{\mathbb{C}_{\beta}}\theta$ is also a semi-basic $1$-form of
  order $\alpha$, for all $\beta\in \{1,...,r\}$. 
\item[v)] Consider $\theta\in \Lambda^1(T^rM)$ a semi-basic $1$-form
  of order $\alpha$ such that
  $\mathcal{L}_{S}\theta-df$ is a semi-basic $1$-form of order $1$,
  for some function $f$ on $T^rM$. Then
  the function $f$ can be restricted to $T^{\alpha}M$ and the $1$-form
  $\theta$ satisfies the following formulae 
\begin{eqnarray}
i_{J^{\gamma}}\theta=\gamma ! \sum_{\beta=1}^{\alpha-\gamma}
\frac{(-1)^{\beta-1}}{(\beta + \gamma) !}\mathcal{L}_S^{\beta-1}d_{J^{\beta+\gamma}}f, \quad \forall \gamma
\in \{0,1,..., \alpha-1\}. \label{tabg}
\end{eqnarray}
\end{itemize}
\end{lem}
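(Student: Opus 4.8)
The plan is to take parts i) and ii) as the computational foundation and then obtain iii), iv), v) formally from them through the Fr\"olicher--Nijenhuis calculus, with v) being the real work. For part i) I would compute the Lie bracket $[\mathbb{C}_{\alpha},S]$ directly in induced coordinates, using the form $S=d_T-(r+1)G^{i}\,\partial/\partial y^{(r)i}$ from \eqref{dtrs}. Grading each $y^{(\gamma)i}$ by weight $\gamma$, one checks that $d_T$ raises weight by one while $\mathbb{C}_{\alpha}$ lowers it by $\alpha-1$, so $[\mathbb{C}_{\alpha},d_T]$ carries the same weight as $\mathbb{C}_{\alpha-1}$. Performing the bracket, the interior terms telescope to exactly $\alpha\mathbb{C}_{\alpha-1}$, while the single top-index boundary term together with every term obtained by differentiating the coefficients $G^{i}$ is proportional to $\partial/\partial y^{(r)i}$, hence lies in $V_r$; this residue is $U_{\alpha}$. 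The case $\alpha=1$ is read off separately under the convention $\mathbb{C}_0=S$.

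For part ii) I would compute the Fr\"olicher--Nijenhuis bracket $[S,J^{\beta}]=\mathcal{L}_S J^{\beta}$ in coordinates and then apply $J^{\alpha}$. The bookkeeping is organized by the identity $J^{\alpha}S=\mathbb{C}_{\alpha}$, proved by induction from $JS=\mathbb{C}_1$ and $J\mathbb{C}_{\beta}=\mathbb{C}_{\beta+1}$. Evaluating $[S,J^{\beta}]$ on a coordinate frame, the terms carrying the spray coefficients $G^{i}$ take values in $V_r=\operatorname{Ker}J$ and are therefore annihilated by $J^{\alpha}$ for every $\alpha\geq 1$, while the surviving part reproduces $[\mathbb{C}_{\alpha},J^{\beta}]$. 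Comparison with \eqref{cajb} then gives \eqref{jasjb}, including the vanishing when $\alpha+\beta>r+1$, since in that range $J^{\alpha+\beta-1}=0$.

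Parts iii) and iv) follow formally from ii) and \eqref{cajb} via the standard commutation rule $\mathcal{L}_X(i_L\omega)=i_L(\mathcal{L}_X\omega)+i_{[X,L]}\omega$. For iii) I write $\theta=\eta\circ J^{r-\alpha+1}$ (the dual characterization of semi-basic forms) and compute $i_{[S,J^{\beta}]}\theta=\eta\circ(J^{r-\alpha+1}[S,J^{\beta}])$; applying \eqref{jasjb} with first index $r-\alpha+1$ gives $-\beta\,\eta\circ J^{r-\alpha+\beta}=-\beta\,i_{J^{\beta-1}}\theta$ when $\beta\leq\alpha$, whereas for $\beta>\alpha$ both sides vanish because $J^{r-\alpha+\beta}=0$. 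For iv) the same rule with $X=\mathbb{C}_{\beta}$, $L=J^{\alpha}$ and $i_{J^{\alpha}}\theta=0$ gives $i_{J^{\alpha}}\mathcal{L}_{\mathbb{C}_{\beta}}\theta=-i_{[\mathbb{C}_{\beta},J^{\alpha}]}\theta=\alpha\,i_{J^{\alpha+\beta-1}}\theta$, which is zero since $\alpha+\beta-1\geq\alpha$ and any form semi-basic of order $\alpha$ is killed by $i_{J^{\alpha'}}$ for all $\alpha'\geq\alpha$.

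Part v) is the crux and where the main effort lies. The engine is once more $\mathcal{L}_S(i_{J^{\beta}}\theta)=i_{J^{\beta}}\mathcal{L}_S\theta+i_{[S,J^{\beta}]}\theta$; substituting \eqref{isjbt} turns this into the recursion $\beta\,i_{J^{\beta-1}}\theta=i_{J^{\beta}}\mathcal{L}_S\theta-\mathcal{L}_S(i_{J^{\beta}}\theta)$. The hypothesis that $\mathcal{L}_S\theta-df$ is semi-basic of order $1$ means $i_J(\mathcal{L}_S\theta-df)=0$, and since a form semi-basic of order $1$ is annihilated by every $i_{J^{\beta}}$, it follows that $i_{J^{\beta}}\mathcal{L}_S\theta=d_{J^{\beta}}f$ for all $\beta\geq 1$. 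Hence $\beta\,i_{J^{\beta-1}}\theta=d_{J^{\beta}}f-\mathcal{L}_S(i_{J^{\beta}}\theta)$, a downward recursion with base case $i_{J^{\alpha}}\theta=0$. Solving it by finite induction downward from $\gamma=\alpha-1$ produces the closed form \eqref{tabg}; the main obstacle is purely the bookkeeping, namely verifying that iterating the factors $1/\beta$ together with the nested $\mathcal{L}_S$ yields exactly the coefficients $\gamma!\,(-1)^{\beta-1}/(\beta+\gamma)!$, which I would confirm by a summation-index shift in the induction step. Finally, running the recursion once more at $\beta=\alpha+1$, where both $i_{J^{\alpha}}\theta=0$ and $i_{J^{\alpha+1}}\theta=0$, forces $d_{J^{\alpha+1}}f=0$, i.e. $df$ is semi-basic of order $\alpha+1$; by the criterion recorded earlier (that $df$ is semi-basic of order $\alpha+1$ exactly when $f$ is constant along the fibres of $\pi^r_{\alpha}$) the function $f$ descends to $T^{\alpha}M$.
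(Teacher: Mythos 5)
Your proposal is correct and follows essentially the same route as the paper: parts iii) and iv) via the commutation rule $\mathcal{L}_X i_L - i_L \mathcal{L}_X = i_{[X,L]}$ together with \eqref{cajb} and \eqref{jasjb}, and part v) via the same downward recursion $\beta\, i_{J^{\beta-1}}\theta = d_{J^{\beta}}f - \mathcal{L}_S(i_{J^{\beta}}\theta)$ starting from $i_{J^{\alpha}}\theta=0$, including the observation that $d_{J^{\alpha+1}}f=0$ forces $f$ to descend to $T^{\alpha}M$. The only divergence is that for i) and ii) you compute directly in coordinates, whereas the paper specializes the identity $[S,J^{\beta}X]-J^{\beta}[S,X]+\beta J^{\beta-1}X=-U_{\beta}\in\operatorname{Im}J^{r}$ quoted from \cite{BCD11} (taking $X=S$, respectively composing with $J^{\alpha}$); your computation would in effect re-derive that identity.
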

\begin{proof}
For $\beta \in \{1,...,r\}$ and for every $X \in
\mathfrak{X}(T^rM)$ we have
\begin{eqnarray} [S,J^{\beta}X] - J^{\beta}[S,X] + \beta
J^{\beta-1}X= -U_{\beta}  \in \operatorname{Ker}J = 
\operatorname{Im}J^r\label{sjbetax1},
\end{eqnarray}
which has been shown in \cite[(3.27)]{BCD11}. 

i) In formula \eqref{sjbetax1} we take $X=S$ and use
$J^{\beta}S=\mathbb{C}_{\beta}$, for all $\beta \in \{1,...,r\}$. It follows formula \eqref{cas}.

ii)  By composing
in formula \eqref{sjbetax1} to the left with $J^{\alpha}$, for $\alpha
\geq 1$, we obtain
formula \eqref{jasjb}.

iii) Consider $\theta \in \Lambda^1(T^rM)$ a semi-basic $1$-form of order
$\alpha$. It follows that there exists $\eta \in \Lambda^1(T^rM)$ such
that $\theta=i_{J^{r-\alpha+1}}\eta$. Using now formula \eqref{jasjb} we
obtain
\begin{eqnarray*}
i_{[S, J^{\beta}]}\theta = i_{J^{r-\alpha+1} [S, J^{\beta}]}\eta =-\beta
i_{J^{r-\alpha+\beta}} \eta = -\beta i_{J^{\beta-1}} \theta, 
\end{eqnarray*} which shows that formula \eqref{isjbt} is true.

iv) Since $\theta\in \Lambda^1(T^rM)$ is a semi-basic $1$-form
  of order $\alpha \in\{1,...,r\}$, it follows that
  $i_{J^{\alpha}}\theta=0$. Using the corresponding commutation rules
  and formulae \eqref{cajb} it follows
\begin{eqnarray*} 
i_{J^{\alpha}}\mathcal{L}_{\mathbb{C}_{\beta}}\theta= \mathcal{L}_{\mathbb{C}_{\beta}}
i_{J^{\alpha}} \theta + i_{[J^{\alpha}, \mathbb{C}_{\beta}]}\theta =
\alpha i_{J^{\alpha+\beta-1}}\theta = 0,
\end{eqnarray*}
which proves that $\mathcal{L}_{\mathbb{C}_{\beta}}\theta$ is a semi-basic
$1$-form of order $\alpha$.

v) We know that $\theta\in \Lambda^1(T^rM)$ is a semi-basic $1$-form
of order $\alpha$, which means $i_{J^{\alpha}}\theta=0$. Moreover,
there exists a function $f\in C^{\infty}(T^rM)$ such that
$\mathcal{L}_S\theta - df$ is a semi-basic $1$-form of order $1$,
which means that we have
\begin{eqnarray} i_J\mathcal{L}_S\theta =
  d_Jf. \label{ijlstdf} \end{eqnarray}
If we apply $i_{J^{\alpha-1}}$ to both sides of formula \eqref{ijlstdf} and use the commutation
rule we obtain
\begin{eqnarray} \mathcal{L}_Si_{J^{\alpha}}\theta +
  i_{[J^{\alpha},S]}\theta=df\circ J^{\alpha}. \label{lstdl2} \end{eqnarray}
Using formula \eqref{isjbt} it follows that $i_{[J^{\alpha}, S]}
\theta = \alpha i_{J^{\alpha-1}}\theta$. From 
formula \eqref{lstdl2} we have that that $\alpha
i_{J^{\alpha-1}}\theta  = i_{J^{\alpha}}df$. Consequently, we have  that $df\circ J^{\alpha+1}=0$, which means 
that $df\in \Lambda^{1}(T^rM)$ is a semi-basic $1$-form of order
$\alpha+1$. It follows that $f$ is constant on the fibres of $\pi^{r}_{\alpha}: T^{r}M \to
T^{\alpha}M$ and therefore, we can restrict $f$ to $T^{\alpha}M$ and assume that it is a
function defined on $T^{\alpha}M$.

We will prove now that $\theta$ satisfies formulae \eqref{tabg}. We have seen above that 
\begin{eqnarray}
i_{J^{\alpha-1}}\theta =\frac{1}{\alpha} d_{J^{\alpha}}f, \label{ijkt1}
\end{eqnarray}
which is formula \eqref{tabg} for $\gamma=\alpha-1$.
We apply $\mathcal{L}_S$ to both sides of this formula, use the
commutation rule, and obtain
\begin{eqnarray*}
i_{J^{\alpha-1}}\mathcal{L}_S\theta + i_{[S,J^{\alpha-1}]}\theta
=\frac{1}{\alpha}\mathcal{L}_Sd_{J^{\alpha}}f. \end{eqnarray*}
We use now formulae \eqref{ijlstdf}  and \eqref{isjbt} to
obtain 
\begin{eqnarray*}
d_{J^{\alpha-1}}f -(\alpha-1) i_{J^{\alpha-2}}\theta
=\frac{1}{\alpha}\mathcal{L}_Sd_{J^{\alpha}}f. \end{eqnarray*}
Above formula implies 
\begin{eqnarray}
i_{J^{\alpha-2}}\theta =\frac{1}{\alpha-1}d_{J^{\alpha-1}}f -
\frac{1}{\alpha(\alpha-1)}\mathcal{L}_S d_{J^{\alpha}}f, \label{ijkt2}
\end{eqnarray}
which is formula \eqref{tabg} for $\gamma=\alpha-2$.
We apply again $\mathcal{L}_S$ to both sides of the
above formula, use the commutation rule, and obtain 
\begin{eqnarray}
i_{J^{\alpha-3}}\theta =\frac{1}{\alpha-2}d_{J^{\alpha-2}}f -
\frac{1}{(\alpha-1)(\alpha-2)} \mathcal{L}_S d_{J^{\alpha -1}}f +
\frac{1}{\alpha(\alpha-1)(\alpha-2)}\mathcal{L}_S^2 d_{J^{\alpha}}f,\label{ijkt3}
\end{eqnarray}
which is formula \eqref{tabg} for $\gamma=\alpha-3$.
We continue the process and obtain
\begin{eqnarray}
i_{J}\theta =\frac{1}{2}d_{J^2}f - \frac{1}{2\cdot 3} \mathcal{L}_S d_{J^3}f + \cdots +
\frac{(-1)^{\alpha-2}}{2\cdots \alpha}\mathcal{L}_S^{\alpha-2} d_{J^{\alpha}}f. \label{ijkt4}
\end{eqnarray}
Formula \eqref{ijkt4} represents formula \eqref{tabg} for $\gamma=1$.
Now, for the last step we use above formula, formula
\eqref{isjbt}, for $\beta=1$, as well as formula \eqref{ijlstdf}:
\begin{eqnarray*}
\theta=-i_{[S,J]}\theta=i_J\mathcal{L}_S\theta -\mathcal{L}_Si_J\theta
= d_Jf- \mathcal{L}_Si_J\theta.
\end{eqnarray*}
It follows that $\theta$ is given by
formula 
\begin{eqnarray}
\theta=\sum_{\beta=1}^{\alpha} \frac{(-1)^{\beta-1}}{\beta
  !}\mathcal{L}_S^{\beta-1}d_{J^{\beta}}f, \label{tab}
\end{eqnarray}
which represents formula \eqref{tabg} for $\gamma=0$.
\end{proof}
Formula \eqref{jasjb} was proven for the case $\alpha=\beta=1$ in
\cite[Lemma 5.1]{CSC86}.

\begin{defn} \label{hom_rspray}
A semispray $S\in
  \mathfrak{X}(T_0^rM)$ of order $r$
 is called \emph{homogeneous} if the distribution
    $\mathcal{D}=\textrm{span} \{S, \mathbb{C}_1, ...., \mathbb{C}_r\}$
    is involutive. \end{defn}
Above definition of homogeneity has been proposed in \cite[Definition
3.1]{CS11}.  In view
of formulae \eqref{cas}, a semispray $S\in \mathfrak{X}(T^r_0M)$ of
order $r$, is homogeneous if and only if for the vertical vector fields
$U_{\alpha}\in \mathfrak{X}^{V_r}(T^r_0M)$, 
there exist the functions $P_{\alpha}\in C^{\infty}(T^r_0M)$ such that
$U_{\alpha}=P_{\alpha}\mathbb{C}_r$, for all $\alpha \in \{1,...,r\}$.
Therefore, a semispray $S$ of order $r$ is homogeneous if and only if there exists functions $P_{\alpha}\in
    C^{\infty}(T^r_0M)$, $\alpha \in \{1,...,r\}$, such
    that 
\begin{eqnarray} [\mathbb{C}_1, S]= S +P_1\mathbb{C}_r, \quad  [\mathbb{C}_{\alpha}, S]=\alpha
  \mathbb{C}_{\alpha-1}+ P_{\alpha}\mathbb{C}_r, \ \alpha \in
  \{2,...,r\}. \label{rhspray}\end{eqnarray}
If we write the Jacobi identities for the vector fields $S$,
$\mathbb{C}_1$, ...., $\mathbb{C}_r$, and use the above formulae, we obtain that functions
$P_1,..., P_r$ must satisfy some consistency conditions. Formulae \eqref{rhspray}
and the consistency conditions for functions
$P_1,..., P_r$  were obtained in \cite[Prop. 3.2]{CS11}. 

For homogeneous higher order differential equation fields, an
important concept is that of projective equivalence, which we borrow
from \cite[Def. 5.1]{CS11}.
\begin{defn}
Consider $S_1$ and $S_2$ two homogeneous semisprays of order $r$. We say that $S_1$ and $S_2$ are
\emph{projectively equivalent} if there exists a function $P\in
C^{\infty}(T^r_0M)$ such that $S_1=S_2-(r+1)P\mathbb{C}_r$. \end{defn}
Two homogeneous semisprays $S_1$ and $S_2$, locally given by formula \eqref{rsemispray}, are projectively equivalent if
and only if the semispray coefficients  $G^i_1$ and $G^i_2$ are related by
$G^i_1=G^i_2 + Py^{(1)i}$, for some function $P\in
C^{\infty}(T^r_0M)$. 
\begin{defn} \label{defn:rspray1} A homogeneous semispray $S\in  \mathfrak{X}(T^r_0M)$ is
  called a \emph{spray of order $r$} if $[\mathbb{C}_1,S]=S$ and
  $[\mathbb{C}_2, S]=2\mathbb{C}_1.$ \end{defn}
Above definition was proposed in \cite{CS11} for \emph{generalized
  sprays} and it is motivated by the following arguments.  It has been shown in \cite[Thm. 5.2]{CS11} that for two projectively
equivalent homogeneous semisprays their geodesics coincide up to an orientation
preserving reparameterization.  Moreover, according to
\cite[Thm. 5.2]{CS11}, the projective class of a homogeneous semispray
contains a spray, that is a homogenous semispray  for which the homogeneity conditions \eqref{rhspray} hold true with
$P_1=P_2=0$.

\section{The inverse problem of the calculus of variations for higher
  order differential equation fields} \label{sec:ls}

The inverse problem of the calculus of variations for a semispray (of
order $1$) was reformulated in \cite{BD09} in terms of semi-basic
$1$-forms. In this section we extend these aspects to the higher order
case. In Theorem \ref{thm:ls} we characterize Lagrangian semisprays of
order $2k-1$ in terms of semi-basic $1$-forms of order $k$.

\subsection{Higher order Lagrangians}

In this subsection we discuss some aspects regarding the geometry of 
a Lagrangian of order $k$. In Lemma \ref{pcel} we study these
geometric aspects in connection with the Poincar\'e-Cartan $1$-form,
which is a semi-basic $1$-form of order $k$.

Consider $L$, a Lagrangian of order $k$, which is a function defined on $T^kM$.  The \emph{Poincar\'e-Cartan
$1$-form} $\theta_L\in \Lambda^1(T^{2k-1}M)$  of $L$ is given by 
\begin{eqnarray}
\theta_L=\sum_{\alpha=1}^k\frac{(-1)^{\alpha-1}}{\alpha
  !}\mathcal{L}^{\alpha-1}_S d_{J^{\alpha}}L,
\label{tl} \end{eqnarray}
where $S\in \mathfrak{X}(T^{2k-1}M)$ is an arbitrary semispray of
order $2k-1$. We will see in Lemma \ref{pcel} that $\theta_L$ does not
depend on $S$. The \emph{Poincar\'e-Cartan $2$-form} $\omega_L\in
\Lambda^2(T^{2k-1}M)$ is given by $\omega_L=-d\theta_L$. 

The \emph{Lagrangian energy function} $\mathcal{E}_L\in
C^{\infty}(T^{2k-1}M)$ is given by 
\begin{eqnarray}
\mathcal{E}_L=\sum_{\alpha=1}^k\frac{(-1)^{\alpha-1}}{\alpha
  !}\mathcal{L}^{\alpha-1}_S \mathbb{C}_{\alpha}(L) - L.
\label{el} \end{eqnarray}
In the next Lemma we discuss some geometric aspects for a Lagrangian
$L$ of order $k$ in terms of its Poincar\'e-Cartan forms and the
Lagrangian energy function. 

\begin{lem} \label{pcel} Consider $L$ a Lagrangian of order $k$. 
\begin{itemize} \item[i)] The Poincar\'e-Cartan $1$-form $\theta_L$ is
  a semi-basic $1$-form of order $k$ on $T^{2k-1}M$, which does not
  depend on the semispray $S$. \item[ii)] The Lagrangian energy function $\mathcal{E}_L\in C^{\infty}(T^{2k-1}M)$ does not
  depend on the semispray $S$ and it is related to the
  Poincar\'e-Cartan $1$-form $\theta_L$ by the following formula
\begin{eqnarray} \mathcal{E}_L=i_S\theta_L -
  L. \label{elis} \end{eqnarray} 
\item[iii)]  The Poincar\'e-Cartan $2$-form $\omega_L$ is a symplectic
  $2$-form on $T^{2k-1}M$ if and only if the Hessian matrix \begin{eqnarray}
 g_{ij}=\frac{\partial ^2 L}{\partial y^{(k)i}\partial
   y^{(k)j}}, \label{gij} \end{eqnarray} has maximal rank $n$ on $T^kM$.
\end{itemize} 
\end{lem}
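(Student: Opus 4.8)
The plan is to handle the three parts in turn, relying on Lemma~\ref{lem:cas} and on the fact that $L$ is a function on $T^kM$, hence depends on the fibre coordinates only up to order $k$.

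For (i), I would first record an order-raising rule: if $\vartheta$ is semi-basic of order $m$ (so $i_{J^m}\vartheta=0$), then
\[
i_{J^{m+1}}\mathcal{L}_S\vartheta=\mathcal{L}_S i_{J^{m+1}}\vartheta-i_{[S,J^{m+1}]}\vartheta=(m+1)i_{J^m}\vartheta=0,
\]
using the commutation rule together with formula \eqref{isjbt}; thus $\mathcal{L}_S\vartheta$ is semi-basic of order $m+1$. Since $L$ is constant along the fibres of $\pi^{2k-1}_k$ we have $i_{J^{k+1}}dL=0$, so $d_{J^{\alpha}}L=i_{J^{\alpha}}dL$ is semi-basic of order $k-\alpha+1$ because $i_{J^{k-\alpha+1}}d_{J^{\alpha}}L=i_{J^{k+1}}dL=0$. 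Applying the order-raising rule $\alpha-1$ times, each summand $\mathcal{L}_S^{\alpha-1}d_{J^{\alpha}}L$ of \eqref{tl} is semi-basic of order $(k-\alpha+1)+(\alpha-1)=k$, so $\theta_L$ is semi-basic of order $k$. For the independence of $S$, I would pass to induced coordinates: two semisprays differ by $Z=S-S'\in\mathfrak{X}^{V_{2k-1}}(T^{2k-1}M)$, which only differentiates $y^{(2k-1)}$ and only produces $dy^{(2k-1)}$. Because $L$ has order $k$ and at most $k-1$ Lie derivatives along $S$ occur in \eqref{tl}, the coefficients never reach order $2k-1$ and the differentials never exceed $dy^{(2k-2)}$; hence $Z$ contributes nothing and $\mathcal{L}_S$ may be replaced by $\mathcal{L}_{d_T}$, which is manifestly $S$-independent.

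For (ii), I would first establish \eqref{elis}. Since $i_S$ commutes with $\mathcal{L}_S$ (as $i_{[S,S]}=0$) and $d_{J^{\alpha}}L(S)=dL(J^{\alpha}S)=dL(\mathbb{C}_{\alpha})=\mathbb{C}_{\alpha}(L)$ by $J^{\alpha}S=\mathbb{C}_{\alpha}$, contracting \eqref{tl} with $S$ gives
\[
i_S\theta_L=\sum_{\alpha=1}^k\frac{(-1)^{\alpha-1}}{\alpha!}\mathcal{L}_S^{\alpha-1}i_S d_{J^{\alpha}}L=\sum_{\alpha=1}^k\frac{(-1)^{\alpha-1}}{\alpha!}\mathcal{L}_S^{\alpha-1}\mathbb{C}_{\alpha}(L),
\]
which equals $\mathcal{E}_L+L$ by \eqref{el}; this is \eqref{elis}. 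The independence of $\mathcal{E}_L$ from $S$ is then immediate from (i): for any two semisprays one has $S-S'\in\mathfrak{X}^{V_{2k-1}}\subset\mathfrak{X}^{V_k}$, on which the order-$k$ semi-basic form $\theta_L$ vanishes, so $i_S\theta_L$ is unchanged, while $\theta_L$ itself is already $S$-independent.

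For (iii), closedness of $\omega_L=-d\theta_L$ is automatic, so the content is non-degeneracy. Writing $\theta_L=\sum_{\beta=0}^{k-1}\theta_{(\beta)i}\,dy^{(\beta)i}$ as in \eqref{sba1}, we get $\omega_L=-\sum_{\beta=0}^{k-1}d\theta_{(\beta)i}\wedge dy^{(\beta)i}$. A direct computation gives the top coefficient $\theta_{(k-1)i}=\tfrac1k\,\partial L/\partial y^{(k)i}$, and by the $d_T$-structure from (i) the coefficient $\theta_{(\beta)i}$ depends on the fibre coordinates only up to order $2k-1-\beta$. Splitting the differentials into a low block $dx^i,\dots,dy^{(k-1)i}$ and a high block $dy^{(k)i},\dots,dy^{(2k-1)i}$, this bound makes the high--low block of the matrix of $\omega_L$ anti-triangular, its anti-diagonal blocks being nonzero constant multiples of $g_{ij}$ (the prototype being the term $-\tfrac1k g_{ij}\,dy^{(k)j}\wedge dy^{(k-1)i}$); hence this off-diagonal block is invertible exactly when $\det g_{ij}\neq0$, and a Schur-complement argument for skew matrices then yields that $\omega_L$ is non-degenerate if and only if $g_{ij}$ has rank $n$ on $T^kM$. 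The hard part is this identification of the anti-diagonal blocks: one must verify that differentiating the iterated total derivative $d_T^{\,k-1-\beta}(\partial L/\partial y^{(k)i})$ by the top velocity $y^{(2k-1-\beta)j}$ returns $g_{ij}$ up to a nonzero constant; the rest is bookkeeping with the anti-triangular structure.
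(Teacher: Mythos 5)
Your proposal is correct and, at its core, follows the same route as the paper: express the components of $\theta_L$ through iterated total derivatives acting on functions of order below $2k-1$ to get $S$-independence, contract \eqref{tl} with $S$ using $J^{\alpha}S=\mathbb{C}_{\alpha}$ to get \eqref{elis}, and reduce non-degeneracy of $\omega_L$ to the identity $\partial\theta_{(k-\alpha)i}/\partial y^{(k+\alpha-1)j}=(-1)^{\alpha-1}\tfrac{(k+\alpha-1)!\,(k-\alpha)!}{(k!)^2}\,g_{ij}$, which is exactly the content of the paper's formulae \eqref{lsalk} and \eqref{pty}. Two of your choices are genuinely different and arguably cleaner. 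First, you prove semi-basicity of order $k$ intrinsically, via the order-raising rule $i_{J^{m+1}}\mathcal{L}_S\vartheta=(m+1)i_{J^{m}}\vartheta$ built from \eqref{isjbt}, whereas the paper simply reads it off the local expression \eqref{localtl2}; likewise you deduce the $S$-independence of $\mathcal{E}_L$ from that of $\theta_L$ together with $i_{S-S'}\theta_L=0$ (since $S-S'\in\mathfrak{X}^{V_{2k-1}}\subset\mathfrak{X}^{V_{k}}$), rather than re-running the $d_T$ argument on \eqref{el}. Second, your block anti-triangular/Schur packaging of part iii) delivers both implications at once; the paper instead derives the rank inequality \eqref{rankolg} and handles the ``only if'' direction separately by exhibiting a kernel vector $X^{i}\partial/\partial y^{(2k-1)i}$ (which is your kernel vector supported in the top block). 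Two caveats, neither fatal: the phrase ``the coefficients never reach order $2k-1$'' is literally false, since $\theta_{(0)i}$ does depend on $y^{(2k-1)i}$; what you actually need, and what is true, is that at every stage where a further $\mathcal{L}_S$ remains to be applied the coefficients have order at most $2k-2$ and no $dy^{(2k-1)i}$ occurs, so the replacement of $\mathcal{L}_S$ by $d_T$ still goes through. And the ``hard part'' you defer in iii), the identification of the anti-diagonal blocks, is precisely \eqref{lsalk}--\eqref{pty} and does hold, so your sketch completes to a full proof.
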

\begin{proof}
i) Locally, the Poincar\'e-Cartan $1$-form $\theta_L$ can be expressed as
follows
\begin{eqnarray}
\theta_L=\theta_{(0)i}dx^i
+ \cdots + \theta_{(k-1)i}dy^{(k-1)i}, \label{localtl2} \end{eqnarray}
where
\begin{eqnarray}
\theta_{(0)i} &=& \frac{1}{1!} \frac{\partial L}{\partial y^{(1)i}} -
\frac{1}{2!} \mathcal{L}_S\left( \frac{\partial L}{\partial
    y^{(2)i}}\right) + \cdots + \frac{(-1)^{k-1}}{k!} \mathcal{L}_S^{k-1}\left( \frac{\partial L}{\partial
    y^{(k)i}}\right), \nonumber \\
\theta_{(1)i} &=& \frac{1}{2} \frac{\partial L}{\partial y^{(2)i}} -
\frac{1}{2\cdot 3} \mathcal{L}_S \left( \frac{\partial L}{\partial
    y^{(3)i}}\right) + \cdots + \frac{(-1)^{k-2}}{2\cdot 3 \cdots k}
  \mathcal{L}_S^{k-2}\left( \frac{\partial L}{\partial y^{(k)i}}\right), \nonumber \\
&\ldots & \label{theta0k} \\
\theta_{(k-2)i} & = & \frac{1}{k-1} \frac{\partial L}{\partial y^{(k-1)i}} -
\frac{1}{k(k-1)}\mathcal{L}_S\left( \frac{\partial L}{\partial
    y^{(k)i}}\right), \nonumber \\
\theta_{(k-1)i}  & = & \frac{1}{k} \frac{\partial L}{\partial y^{(k)i}}. \nonumber
\end{eqnarray}
Consider $d_T$, the Tulczyjew operator \eqref{dtr} on $T^{2k-1}M$. $L$ is a
Lagrangian on $T^kM$ and ${\partial L}/{\partial
  y^{(\alpha)i}}$ are locally defined on $T^kM$, for all
  $\alpha\in\{1,..., k\}$. Therefore, we can view 
\begin{eqnarray}
\mathcal{L}^{\beta}_S\left(\frac{\partial L}{\partial
  y^{(\alpha)i}}\right)=d_T^{\beta}\left(\frac{\partial L}{\partial
  y^{(\alpha)i}}\right), \label{lsbla} 
\end{eqnarray}
as locally defined functions on $T^{k+\beta}M,$ for all $\beta \in
\{1,..., k-1\}$. It follows that all components $\theta_{(\alpha)i}$, $\alpha
\in \{0,...,k-1\}$, in formula \eqref{theta0k}, do not depend on the
semispray $S$. From formula \eqref{localtl2} it follows that
$\theta_L$ is a semi-basic $1$-form of order $k$, which does not depend on the semispray $S$. 

ii) Since for all $\alpha \in \{1,..., k\}$ the functions
$\mathbb{C}_{\alpha}(L)$ are defined on $T^kM$, it follows that we can
view the functions
$$ \mathcal{L}^{\alpha-1}_S \mathbb{C}_{\alpha}(L) = d_T^{\alpha-1}
\mathbb{C}_{\alpha}(L)  $$ as being defined on
$T^{k+\alpha-1}M$. Therefore, the right hand side of formula
\eqref{el}, and hence the energy $\mathcal{E}_L$, is independent of
the choice of the semispray $S$.  

If we apply $i_S$ to both sides of formula \eqref{tl} it follows 
\begin{eqnarray}
\nonumber i_S\theta_L &=&\sum_{\alpha=1}^k\frac{(-1)^{\alpha-1}}{\alpha
  !} i_S\mathcal{L}^{\alpha-1}_S d_{J^{\alpha}}L = \sum_{\alpha=1}^k\frac{(-1)^{\alpha-1}}{\alpha
  !} \mathcal{L}^{\alpha-1}_S i_S d_{J^{\alpha}}L \\ &=& \sum_{\alpha=1}^k\frac{(-1)^{\alpha-1}}{\alpha
  !} \mathcal{L}^{\alpha-1}_S \mathcal{L}_{J^{\alpha}S}L = \sum_{\alpha=1}^k\frac{(-1)^{\alpha-1}}{\alpha
  !} \mathcal{L}^{\alpha-1}_S \mathbb{C}_{\alpha}L.
\label{istl} \end{eqnarray}
In the above formula we did use the commutation rule
$i_Sd_{J^{\alpha}}+ d_{J^{\alpha}} i_S = \mathcal{L}_{J^{\alpha}S}
+i_{[J^{\alpha}, S]}$, \cite[A.1]{GM00}, as well as the fact that
  $J^{\alpha}S=C_{\alpha}$. From formula \eqref{istl} we obtain that
  \eqref{elis} is true. 

iii) Using formula \eqref{localtl2} and the fact that we can view
$\theta_{(k-\alpha)i}$ as locally defined functions on
$T^{(k+\alpha-1)}M$, it follows that 
\begin{eqnarray} 2kn \geq \textrm{rank} (d\theta_L) \geq 2\cdot \sum_{\alpha=1}^k
  \textrm{rank} \left( \frac{\partial \theta_{(k-\alpha)i}}{\partial
      y^{(k+\alpha-1)j}}\right). \label{rankol} \end{eqnarray} 
Since ${\partial L}/{\partial y^{(k)i}}$ are locally defined functions on
  $T^kM$, we have 
\begin{eqnarray} \mathcal{L}_S^{\alpha}\left(\frac{\partial
      L}{\partial y^{(k)i}}\right)= (k+1)\cdots
  (k+\alpha)y^{(k+\alpha)j} g_{ij} + f_{\alpha}, \label{lsalk}\end{eqnarray}
for $f_{\alpha}$ locally defined functions on $T^{k+\alpha-1}M$. Using
the formulae \eqref{lsbla} and \eqref{lsalk} and the 
components $\theta_{(\alpha)i}$ of the Poincar\'e-Cartan $1$-form $\theta_L$ it
follows
\begin{eqnarray} \frac{\partial \theta_{(k-\alpha)i}}{\partial
    y^{(k+\alpha-1)j}} = (-1)^{\alpha-1}
\frac{(k+\alpha-1)!(k-\alpha)!}{(k!)^2} g_{ij}, \forall \alpha \in
\{1,...,k\}. \label{pty}\end{eqnarray}  
Now, from formulae \eqref{rankol} and \eqref{pty} it follows that 
\begin{eqnarray} 2kn \geq \textrm{rank} (d\theta_L) \geq 2k \cdot
  \textrm{rank}(g_{ij}). \label{rankolg} \end{eqnarray}
We prove the first implication of part iii) of the lemma by
contradiction. We assume that $\omega_L=-d\theta_L$ is a symplectic
structure on $T^{2k-1}M$ and also that $\textrm{rank}(g_{ij})<n$. It
  follows that there are locally defined functions $X^i$ such that
  $g_{ij}X^j=0$. It follows that the non-zero vector field
  $X=X^i{\partial}/{\partial y^{(2k-1)i}}$ satisfies
  $i_Xd\theta_L=0$, which contradicts the fact that $\omega_L$ is a
  symplectic structure. The converse implication of the third item of
  the lemma follows directly from formula \eqref{rankolg}. If
  $\textrm{rank}(g_{ij})=n$ we obtain that
  $\textrm{rank}(d\theta_L)=2n$ and hence $\omega_L$ is a symplectic structure. 
\end{proof}

The components $\theta_{(\alpha)i}$, $\alpha\in \{0,...,k-1\}$, of
the Poincar\'e-Cartan $1$-form $\theta_L$, in formula \eqref{theta0k},
are the Jacobi-Ostrogradski generalized momenta, \cite{deleon95}. 

The local expression \eqref{localtl2} - \eqref{theta0k} for $\theta_L$ can be written in a more compact form as follows
\begin{eqnarray}
\theta_L=\sum_{\alpha=1}^k(\alpha-1)! \left\{\sum_{\beta=\alpha}^{k}
  \frac{(-1)^{\beta-\alpha}}{\beta
    !}\mathcal{L}_S^{\beta-\alpha}\left(\frac{\partial L}{\partial
      y^{(\beta)i}}\right)\right\}
dy^{(\alpha-1)i}. \label{localtl1}\end{eqnarray}

\begin{defn} \label{regularlk}  A Lagrangian $L$ of order $k$ is said
  to de regular if the Poincar\'e-Cartan $2$-form $\omega_L$ is a
  symplectic $2$-form on $T^{2k-1}M$. \end{defn}
Using part iii) of Lemma \ref{pcel}  we have that a Lagrangian $L$ of order
$k$ is regular if and only if the
 Hessian matrix \eqref{gij} has maximal rank $n$ on
$T^kM$. These regularity conditions correspond to the regularity
conditions for minimal-order Lagrangians proposed by O. Krupkov\'a in
\cite[Chapter 6]{krupkova97}.

\subsection{Lagrangian semisprays}  \label{subsec:ls} 

The inverse problem of the calculus of variations for systems of
higher order ordinary differential equations can be formulated as
follows.  Under what conditions the solutions of the  system
\eqref{rode} of order $2k$ coincide with the solutions of the
Euler-Lagrange equations
\begin{eqnarray}
\frac{\partial L}{\partial x^i} - \frac{1}{1!} \frac{d}{dt}\left(\frac{\partial
    L}{\partial y^{(1)i}}\right) + \cdots + \frac{(-1)^k}{k!} \frac{d^k}{dt^k}\left(\frac{\partial
    L}{\partial y^{(k)i}}\right) =0, \label{elk}
\end{eqnarray} for some Lagrangian $L$ of order $k$?
The equivalence of the two systems \eqref{rode} and \eqref{elk}
require that the Hessian matrix \eqref{gij}, of the sought after Lagrangian $L$ of order
$k$, has rank $n$ and hence the Lagrangian has to be regular. 

\begin{defn} \label{lsk} A semispray $S$, of order
  $2k-1$, is called a \emph{Lagrangian semispray} if its geodesics,
  which are solutions to the system \eqref{rode}, for $r=2k-1$, are 
  solutions to the Euler-Lagrange equations \eqref{elk}, for some
 regular Lagrangian $L$ of order $k$, defined locally on some open
 domain in $T^kM$. \end{defn} 

For a given semispray of order $2k-1$, the Lagrangian to search for can
be of order higher then $k$ and the regularity condition can be more
general, see \cite{krupkova86, krupkova97}. In this work, we focus our
attention on Lagrangians of minimal-order and hence the regularity
condition is given in Definition \ref{regularlk}

Next theorem provides a characterization for Lagrangian semisprays,
in terms of semi-basic $1$-forms, extending the results obtained in
\cite{BD09}. In \cite[Thm. 3.2]{deLeon92}, Lagrangian
semisprays of order $2k-1$ are characterized in terms of a closed
$2$-form, extending the $k=1$ case, which was studied in
\cite{crampin81}. The relationship between variational equations of an
arbitrary order and closed $2$-forms has been investigated in
\cite{krupkova86, krupkova87}. 

\begin{thm} \label{thm:ls} Consider $S$ a semispray of order
  $2k-1$. \begin{itemize} \item[i)] $S$ is a Lagrangian semispray if
    and only if there exists a (locally defined) regular Lagrangian $L$ of order $k$ such
    that either one, of the following equivalent two conditions, is
    satisfied
\begin{eqnarray} \mathcal{L}_S\theta_L=dL, \quad
  i_S\omega_L=d\mathcal{E}_L. \label{isoel} \end{eqnarray}
\item[ii)]  $S$ is a Lagrangian semispray if
    and only if there exists a (locally defined) semi-basic $1$-form $\theta$ on
    $T^{2k-1}M$ of order $k$ such that
    $\textrm{rank}(d\theta) = 2kn$ and the $1$-form
    $\mathcal{L}_S\theta$ is closed. In this case $\theta$ is the
    Poincar\'e-Cartan $1$-form of some locally defined regular Lagrangian $L$
    of order $k$. \end{itemize} \end{thm}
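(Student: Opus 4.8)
The plan is to prove the two items by reducing everything to the local Euler--Lagrange equations and to the semi-basic structure already established in Lemma \ref{pcel} and Lemma \ref{lem:cas}. The central bridge is the identity \eqref{elis}, $\mathcal{E}_L = i_S\theta_L - L$, together with $\omega_L = -d\theta_L$ and Cartan's magic formula $\mathcal{L}_S = i_S d + d i_S$.

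For part i), I would first show the equivalence of the two conditions in \eqref{isoel}. Starting from $\mathcal{L}_S\theta_L = dL$, apply Cartan's formula: $i_S d\theta_L + d i_S\theta_L = dL$, i.e. $-i_S\omega_L + d(i_S\theta_L) = dL$. Using \eqref{elis} this becomes $-i_S\omega_L + d(\mathcal{E}_L + L) = dL$, which rearranges to $i_S\omega_L = d\mathcal{E}_L$; the steps are reversible, giving equivalence. Next I would connect these invariant equations to the Euler--Lagrange system \eqref{elk}. Writing $\theta_L$ in the explicit momentum form \eqref{localtl2}--\eqref{theta0k} and computing $\mathcal{L}_S\theta_L$ using $Sf = d_T f$ for functions pulled back from lower-order bundles, the coefficient of $dx^i$ in $\mathcal{L}_S\theta_L - dL$ should collapse, after telescoping the $d_T$-derivatives of the Jacobi--Ostrogradski momenta, precisely into the left-hand side of \eqref{elk}. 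The regularity hypothesis (Hessian \eqref{gij} of rank $n$) guarantees via Lemma \ref{pcel}(iii) that $\omega_L$ is symplectic, so that $S$ is uniquely determined and its geodesic equations \eqref{rode} coincide with \eqref{elk}. Thus $S$ is a Lagrangian semispray if and only if \eqref{isoel} holds.

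For part ii), one direction is immediate: if $S$ is a Lagrangian semispray, take $\theta = \theta_L$; by Lemma \ref{pcel}(i) it is semi-basic of order $k$, by regularity and \eqref{rankolg} it satisfies $\mathrm{rank}(d\theta_L) = 2kn$, and by part i) $\mathcal{L}_S\theta_L = dL$ is exact, hence closed. The substantial direction is the converse: given an abstract semi-basic $1$-form $\theta$ of order $k$ with $\mathrm{rank}(d\theta) = 2kn$ and $\mathcal{L}_S\theta$ closed, I must manufacture a regular Lagrangian $L$ with $\theta = \theta_L$. Here I would invoke Lemma \ref{lem:cas}(v): since $\mathcal{L}_S\theta$ is closed it is locally $df$ for some function $f$, and in particular $\mathcal{L}_S\theta - df = 0$ is trivially semi-basic of order $1$, so the hypotheses of part (v) apply with $\alpha = k$. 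Lemma \ref{lem:cas}(v) then restricts $f$ to $T^kM$---this $f$ is the candidate Lagrangian $L$---and formula \eqref{tab} reconstructs $\theta$ exactly as the sum $\sum_{\beta=1}^{k}\frac{(-1)^{\beta-1}}{\beta!}\mathcal{L}_S^{\beta-1}d_{J^\beta}f$, which is precisely the defining expression \eqref{tl} for $\theta_L$. Thus $\theta = \theta_L$, and the rank condition on $d\theta = -\omega_L$ forces, via \eqref{rankolg}, that the Hessian has rank $n$, so $L$ is regular; part i) then shows $S$ is a Lagrangian semispray.

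The main obstacle I anticipate is the converse of part ii): the delicate point is verifying that Lemma \ref{lem:cas}(v) genuinely applies, i.e. that closedness of $\mathcal{L}_S\theta$ yields a \emph{function} $f$ (not just a closed form) whose restriction to $T^kM$ serves as a bona fide Lagrangian, and that the reconstruction \eqref{tab} matches \eqref{tl} term by term rather than merely up to a lower-order semi-basic correction. A secondary subtlety is that $f$ is only locally defined (the Poincar\'e lemma gives a local primitive), which is why the statement is careful to say ``locally defined''; I would keep all constructions local on the domain of a chart and not attempt a global potential. The telescoping computation in part i) identifying the $dx^i$-coefficient of $\mathcal{L}_S\theta_L - dL$ with the Euler--Lagrange expression is routine but bookkeeping-heavy, so I would organize it by recognizing that each $d_T$ applied to a generalized momentum shifts the order by one and that the alternating-sign structure of \eqref{theta0k} is designed to telescope.
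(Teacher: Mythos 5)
Your proposal is correct and follows essentially the same route as the paper: the equivalence of the two equations in \eqref{isoel} via \eqref{elis} (which you make explicit with Cartan's formula), the local computation identifying the $dx^i$-coefficient of $dL-\mathcal{L}_S\theta_L$ with the Euler--Lagrange expression, and, for the converse of part ii), the Poincar\'e lemma followed by Lemma \ref{lem:cas}(v) to restrict the local primitive to $T^kM$ and reconstruct $\theta$ as $\theta_L$ via \eqref{tab}, with the rank condition supplying regularity through \eqref{rankolg}. The ``obstacles'' you flag are exactly the points the paper's proof addresses, so no gap remains.
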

\begin{proof}
i) Using the Euler-Lagrange equations \eqref{elk}, it follows that the semispray $S$ is Lagrangian if and only if it satisfies the equation
\begin{eqnarray}
\frac{\partial L}{\partial x^i} - \frac{1}{1!} S\left(\frac{\partial
    L}{\partial y^{(1)i}}\right) + \cdots + \frac{(-1)^k}{k!} S^k\left(\frac{\partial
    L}{\partial y^{(k)i}}\right) =0, \label{els}
\end{eqnarray} for some (locally defined) regular Lagrangian $L$ of
order $k$. 

In view of formula \eqref{elis} we obtain that the two
equations \eqref{isoel} are equivalent. Therefore,  we will have to 
prove that equation \eqref{els} and first equation \eqref{isoel}
are equivalent.

Using expression \eqref{localtl1} for the Poincar\'e-Cartan
$1$-form $\theta_L$ and the fact that
$\mathcal{L}_Sdy^{(\alpha-1)i}=\alpha dy^{(\alpha)i}$ it follows 
\begin{eqnarray*}
\nonumber \mathcal{L}_S\theta_L & = & \sum_{\alpha=1}^k(\alpha-1)! \left\{\sum_{\beta=\alpha}^{k}
  \frac{(-1)^{\beta-\alpha}}{\beta
    !}\mathcal{L}_S^{\beta-\alpha+1}\left(\frac{\partial L}{\partial
      y^{(\beta)i}}\right)\right\} dy^{(\alpha-1)i} \\
 \nonumber &  & + \sum_{\alpha=1}^k\alpha ! \left\{\sum_{\beta=\alpha}^{k}
  \frac{(-1)^{\beta-\alpha}}{\beta !}\mathcal{L}_S^{\beta-\alpha}\left(\frac{\partial L}{\partial
      y^{(\beta)i}}\right)\right\} dy^{(\alpha)i} \\
& = & \sum_{\beta=1}^k \frac{(-1)^{\beta-1}}{\beta
  !}\mathcal{L}_S^{\beta}\left(\frac{\partial L}{\partial
    y^{(\beta)i}}\right) dx^i + \sum_{\alpha=1}^k \frac{\partial L}{\partial
    y^{(\alpha)i}} dy^{(\alpha)i} . \label{lstl}\end{eqnarray*}
If we use the above expression for $\mathcal{L}_S\theta_L$ it
follows that
\begin{eqnarray} dL- \mathcal{L}_S\theta_L =\left\{ \frac{\partial
      L}{\partial x^i} + \sum_{\beta=1}^k \frac{(-1)^{\beta}}{\beta
  !}\mathcal{L}_S^{\beta}\left(\frac{\partial L}{\partial
    y^{(\beta)i}}\right) \right\} dx^i \label{dllstl} \end{eqnarray}
is a semi-basic $1$-form on $T^{2k-1}M$ of order $1$. Formula
\eqref{dllstl} shows that equation \eqref{els} and first equation \eqref{isoel}
are equivalent. 

ii) For the direct implication of this part, we assume that $S$ is a
Lagrangian semispray. Therefore, semispray $S$ satisfies first equation
\eqref{isoel}, for some regular Lagrangian $L$ of
order $k$. We consider
$\theta=\theta_L\in \Lambda^{1}(T^{2k-1}M)$, its Poincar\'e-Cartan
$1$-form, which is a semi-basic $1$-form of order $k$ and satisfies
first equation \eqref{isoel}. By Definition \ref{regularlk} we have
that $\textrm{rank}(d\theta)=2kn$.

For the converse, let us consider $\theta\in \Lambda^{1}(T^{2k-1}M)$, a
semi-basic $1$-form of order $k$ such that $\mathcal{L}_S\theta$ is a
closed $1$-form. Therefore $\mathcal{L}_S\theta$ is locally exact and
hence there exists $L$, a locally defined function on $T^{2k-1}M$, such
that 
\begin{eqnarray} \mathcal{L}_S\theta =
  dL. \label{lstdl} \end{eqnarray}
We want to prove now that $L$ is constant on the fibres $\pi^{2k-1}_k$
and hence we can view it as a function defined on some open domain of $T^kM$. Moreover, we will
prove that $\theta$ is the Poincar\'e-Cartan $1$-form
$\theta_L$ of $L$. For these, as we have seen in the last part of
Lemma \ref{lem:cas}, we need a condition weaker then
\eqref{lstdl}, namely we will use the fact that
$\mathcal{L}_S\theta-dL$ is a semi-basic $1$-form of order $1$. This
means that 
\begin{eqnarray*} i_J\mathcal{L}_S\theta =
  d_JL. \label{ijlstdl} \end{eqnarray*}
According to part v) of Lemma \ref{lem:cas} it follows that one can
restrict the function $L$ to some open domain of $T^kM$ and the semi-basic $1$-form $\theta$
is given by formula \eqref{tab}, where $f=L$ and $\alpha=k$. It
follows that $\theta$ is given by
formula \eqref{tl} and hence it is the Poincar\'e-Cartan $1$-form of
the function $L$, which means that
$\theta=\theta_L$. Using the assumption
$\textrm{rank}(d\theta) = 2kn$ it follows that the Poincar\'e-Cartan $2$-form of $L$,
$\omega_L=-d\theta_L=-d\theta$, is a symplectic structure. Hence $L$ is a (locally defined)
regular Lagrangian of order $k$. If we replace $\theta=\theta_L$ in formula
\eqref{lstdl} it follows that the semispray $S$ satisfies first
formula \eqref{isoel} for the Lagrangian $L$. In view
of the first part of the theorem it follows that the semispray $S$ is Lagrangian.
\end{proof}

According to Definition \ref{regularlk}, we have that for a regular
Lagrangian $L$ of order $k$, second equation \eqref{isoel} has a unique
solution. This way, to each regular Lagrangian $L$ on $T^kM$ it
corresponds a unique Lagrangian semispray $S\in
\mathfrak{X}(T^{2k-1}M)$. We will refer to this semispray as to the
canonical semispray (or the Euler-Lagrange vector field) associated to the Lagrangian $L$ of order $k$. Using the terminology introduced by Krupkov\'a
in \cite[Ch. 4]{krupkova97} we can say that for a regular Lagrangian its
Euler-Lagrange distribution has a constant rank equal to one and it is
spanned by the semispray $S$.    

If we want to determine the local coefficients $G^i$ of a Lagrangian semispray $S$
of order $2k-1$, we use formula \eqref{lsalk} and write equations
\eqref{els} in the following equivalent form 
\begin{eqnarray}
(-1)^k {2k \choose k} g_{ij}G^j =  \frac{\partial L}{\partial x^i} - \frac{1}{1!} d_T\left(\frac{\partial
    L}{\partial y^{(1)i}}\right) + \cdots + \frac{(-1)^k}{k!} d^k_T\left(\frac{\partial
    L}{\partial y^{(k)i}}\right). \label{gijgj} 
\end{eqnarray} It follows that for a regular Lagrangian, the Hessian matrix
$g_{ij}$ is invertible and hence equations \eqref{gijgj} uniquely
determine the semispray coefficients $G^i$. 

For a Lagrangian semispray $S$, its geodesics are solutions of the
Euler-Lagrange equations \eqref{elk}. Moreover, the geodesic equations
\eqref{rode}, with $r=2k-1$, for the Lagrangian semispray $S$ and the
Euler-Lagrange equations \eqref{elk} are related by
\begin{eqnarray}
\frac{(-1)^k}{k!}g_{ij}\left\{\frac{d^{2k}x^j}{dt^{2k}} + (2k)! G^j \right\} =  \frac{\partial L}{\partial x^i} - \frac{1}{1!} \frac{d}{dt}\left(\frac{\partial
    L}{\partial y^{(1)i}}\right) + \cdots + \frac{(-1)^k}{k!} \frac{d^k}{dt^k}\left(\frac{\partial
    L}{\partial y^{(k)i}}\right) \end{eqnarray}
where $g_{ij}$ is the Hessian matrix \eqref{gij}. The two systems of
equations \eqref{rode} and \eqref{elk} coincide if the Lagrangian is
regular.

Next lemma presents some compatibility conditions between the
geometric structures associated to a Lagrangian and the Liouville
vector fields. These properties will be useful in the next section to
see how the homogeneity properties of a Finsler function transfer to
the induced geometric structures. 
\begin{lem} \label{lem:ictl}
Consider $L$ a Lagrangian on $T^kM$ and $\theta_L\in \Lambda^1(T^{2k-1}M)$ its
Poincar\'e-Cartan $1$-form. The following formulae are true:
\begin{eqnarray}
\nonumber \mathcal{L}_{\mathbb{C}_1}\theta_L & = & \theta_{\mathbb{C}_1(L)-L}, \\
i_{\mathbb{C}_{\alpha}}\theta_L &=& \alpha !
\sum_{\beta=1}^{k-\alpha}\frac{(-1)^{\beta-1}}{(\alpha+\beta)!}
\mathcal{L}_S^{\beta-1}\left(\mathbb{C}_{\alpha +
    \beta}(L)\right), \quad \forall \alpha \in \{1, ...,
k-1\}, \label{ictl} \\
\nonumber i_{\mathbb{C}_{\alpha}}\theta_L &=& 0, \quad \forall \alpha \in \{k, ...,
2k-1\}.
\end{eqnarray}
\end{lem}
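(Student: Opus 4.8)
The plan is to prove the three formulae separately, working from the last (easiest) to the first (hardest). Throughout I would lean on two elementary facts about the interaction of $J$ with the Liouville fields: first, $\mathbb{C}_\alpha=J^\alpha S$ (already used in the proof of Lemma \ref{lem:cas}), and second, the coordinate shift $J(\partial/\partial y^{(\beta)i})=\partial/\partial y^{(\beta+1)i}$ gives $J^\alpha\mathbb{C}_\gamma=\mathbb{C}_{\gamma+\alpha}$ (with $\mathbb{C}_\delta=0$ for $\delta>r$). Combined with $d_{J^\delta}L=i_{J^\delta}dL$ for a function, these yield the two contraction identities $i_S\,d_{J^\delta}L=dL(J^\delta S)=\mathbb{C}_\delta(L)$ and, more generally, $i_{\mathbb{C}_\gamma}d_{J^\delta}L=\mathbb{C}_{\gamma+\delta}(L)$, which will feed every computation below.

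For the third formula I would only observe that $\theta_L$ is semi-basic of order $k$ by Lemma \ref{pcel}, while the nesting $V_\alpha\subseteq V_k$ for $\alpha\ge k$ forces $\mathbb{C}_\alpha\in\mathfrak{X}^{V_\alpha}(T^{2k-1}M)\subseteq\mathfrak{X}^{V_k}(T^{2k-1}M)$; a semi-basic $1$-form of order $k$ vanishes on such vertical fields, so $i_{\mathbb{C}_\alpha}\theta_L=0$ for $\alpha\in\{k,\dots,2k-1\}$. For the middle formula the key step is the identity $i_{\mathbb{C}_\alpha}\theta_L=\theta_L(J^\alpha S)=i_S\big(i_{J^\alpha}\theta_L\big)$. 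Since $\theta_L$ is semi-basic of order $k$ and $\mathcal{L}_S\theta_L-dL$ is semi-basic of order $1$ (formula \eqref{dllstl}), part v) of Lemma \ref{lem:cas} applies with $f=L$ and delivers the closed form \eqref{tabg} for $i_{J^\alpha}\theta_L$. I would then contract with $S$, push $i_S$ through the powers $\mathcal{L}_S^{\beta-1}$ (which is legitimate because $i_S$ and $\mathcal{L}_S$ commute, $[S,S]=0$), and finish with $i_S\,d_{J^{\beta+\alpha}}L=\mathbb{C}_{\alpha+\beta}(L)$ to land exactly on the stated sum for $\alpha\in\{1,\dots,k-1\}$.

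The first formula is the substantive one, and I would prove it by the uniqueness implicit in part v) of Lemma \ref{lem:cas}: the explicit formula \eqref{tab} shows that any semi-basic $1$-form $\eta$ of order $k$ with $\mathcal{L}_S\eta-dh$ semi-basic of order $1$ must coincide with $\theta_h$. Setting $\tilde L=\mathbb{C}_1(L)-L$, it therefore suffices to verify two things for $\eta=\mathcal{L}_{\mathbb{C}_1}\theta_L$. That $\mathcal{L}_{\mathbb{C}_1}\theta_L$ is semi-basic of order $k$ is immediate from part iv) of Lemma \ref{lem:cas}. For the second I would compute $\mathcal{L}_S\mathcal{L}_{\mathbb{C}_1}\theta_L=\mathcal{L}_{\mathbb{C}_1}\mathcal{L}_S\theta_L+\mathcal{L}_{[S,\mathbb{C}_1]}\theta_L$, using $[S,\mathbb{C}_1]=-S-U_1$ from \eqref{cas}. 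Writing $\mathcal{L}_S\theta_L=dL+\sigma$ with $\sigma$ semi-basic of order $1$, and using $\mathcal{L}_{\mathbb{C}_1}dL=d(\mathbb{C}_1L)$ together with part iv) to keep $\mathcal{L}_{\mathbb{C}_1}\sigma$ semi-basic of order $1$, everything collapses to $d(\mathbb{C}_1L-L)$ modulo semi-basic $1$-forms of order $1$.

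The main obstacle is the leftover term $\mathcal{L}_{U_1}\theta_L$ arising from the non-commutativity of $\mathbb{C}_1$ and $S$, which I cannot suppress via homogeneity since none is assumed here. I would dispose of it as follows: because $U_1\in\mathfrak{X}^{V_r}(T^{2k-1}M)\subseteq\mathfrak{X}^{V_k}(T^{2k-1}M)$ one has $i_{U_1}\theta_L=0$, so $\mathcal{L}_{U_1}\theta_L=i_{U_1}d\theta_L$; and from the local expression $\theta_L=\sum_{\gamma=0}^{k-1}\theta_{(\gamma)i}\,dy^{(\gamma)i}$ with $\theta_{(\gamma)i}$ defined on $T^{2k-1-\gamma}M$, only the coefficient $\theta_{(0)i}$ depends on the top coordinate $y^{(2k-1)j}$ on which $U_1$ differentiates. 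Hence $i_{U_1}d\theta_L$ is proportional to $dx^i$, i.e. semi-basic of order $1$. With this the expression $\mathcal{L}_S\mathcal{L}_{\mathbb{C}_1}\theta_L-d\tilde L$ is semi-basic of order $1$, and the uniqueness yields $\mathcal{L}_{\mathbb{C}_1}\theta_L=\theta_{\tilde L}=\theta_{\mathbb{C}_1(L)-L}$, completing the proof.
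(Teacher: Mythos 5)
Your proof is correct and follows essentially the same route as the paper's: parts iv) and v) of Lemma \ref{lem:cas} for the first formula, contraction of the identities \eqref{tabg} (with $f=L$) by $i_S$ for the second, and semi-basicity of order $k$ for the third, including the same local-coordinate argument showing that $\mathcal{L}_{U_1}\theta_L$ is semi-basic of order $1$. The one small refinement on your side is that, by working modulo semi-basic $1$-forms of order $1$ via \eqref{dllstl}, you use an arbitrary semispray instead of assuming, as the paper's proof does, that a semispray solving \eqref{isoel} exists --- an assumption that implicitly requires $L$ to be regular, whereas the lemma is stated for an arbitrary Lagrangian.
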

\begin{proof}
For the Lagrangian function $L$ consider $S$ a semispray, solution to
one of the two equivalent equations \eqref{isoel}, which means
$\mathcal{L}_S\theta_L=dL$. If we apply $\mathcal{L}_{\mathbb{C}_1}$ to
both sides of this formula and use the commutation rule we obtain 
\begin{eqnarray} 
\mathcal{L}_S\mathcal{L}_{\mathbb{C}_1}\theta_L +
\mathcal{L}_{[\mathbb{C}_1, S]}\theta_L = d\mathbb{C}_1(L). \label{lslc1t1}\end{eqnarray}
Using formula \eqref{cas}, it follows that $[\mathbb{C}_1, S]=S+U_1$,
for $U_1\in \mathfrak{X}^{V_{2k-1}}(T^{2k-1}M)$. If we replace $[\mathbb{C}_1,
S]$ in formula \eqref{lslc1t1} we obtain  
\begin{eqnarray} 
\mathcal{L}_S\mathcal{L}_{\mathbb{C}_1}\theta_L =
d\left(\mathbb{C}_1(L)-L\right) - \mathcal{L}_{U_1}\theta_L. \label{lslc1t2}\end{eqnarray}
Using the local expression \eqref{localtl2} of the Poincar\'e-Cartan
$1$-form $\theta_L$ and the fact that its only component that depends
on $y^{(2k-1)i}$ is $\theta_{(0)i}$, which is given in formula
\eqref{theta0k}, it follows that 
\begin{eqnarray*}
\mathcal{L}_{U_1}\theta_L=\mathcal{L}_{U_1}\left(\theta_{(0)i}\right)
dx^i. \end{eqnarray*}
Therefore $\mathcal{L}_{U_1}\theta_L$ is a semi-basic $1$-form of
order $1$. Using formula \eqref{lslc1t2} it follows  
\begin{eqnarray} 
i_J\mathcal{L}_S\mathcal{L}_{\mathbb{C}_1}\theta_L =
i_Jd\left(\mathbb{C}_1(L)-L\right). \label{lslc1t3}\end{eqnarray}
According to part iv) of Lemma \ref{lem:cas} it follows
that $\mathcal{L}_{\mathbb{C}_1}\theta_L$ is a semi-basic $1$-form of
order $k$. We use now part v) of Lemma \ref{lem:cas} to conclude,
from formula \eqref{lslc1t3}, that the semi-basic $1$-form of order $k$,
$\mathcal{L}_{\mathbb{C}_1}\theta_L$, satisfies formula \eqref{tab}
for $\alpha=k$ and $f=\mathbb{C}_1(L)-L $. In view of formula
\eqref{tl}, this means that $\mathcal{L}_{\mathbb{C}_1}\theta_L$  is the Poincar\'e-Cartan $1$-form
of the function $\mathbb{C}_1(L)-L$, which is first formula \eqref{ictl}.

Now, we use formula $\mathcal{L}_S\theta_L=dL$ and compose both sides
with $i_J$, which means that $ i_J\mathcal{L}_S\theta_L=d_JL$.
Using this formula and part v) of Lemma \ref{lem:cas} 
it follows that the semi-basic $1$-form of order $k$,  $\theta_L$
satisfies formulae \eqref{tabg} for $\alpha=k$, $\gamma \in \{0,...,
k-1\}$ and $f=L$, which can be written as follows
 \begin{eqnarray}
i_{J^{\alpha}}\theta_L &=& \alpha !
\sum_{\beta=1}^{k-\alpha}\frac{(-1)^{\beta-1}}{(\alpha+\beta)!}
\mathcal{L}_S^{\beta-1} d_{J^{\alpha +
    \beta}}L, \quad \forall \alpha \in \{1, ...,
k-1\}. \label{ijatl} \end{eqnarray}
We note that both sides in above formulae do not depend on the chosen
semispray $S$. If we compose with $i_S$ in both sides of formulae
\eqref{ijatl}, we obtain formulae \eqref{ictl} for $\alpha \in \{1, ...,
k-1\}$. 

Since $\theta_{L}$ is a semi-basic $1$-form of order $k$, it follows
that there exists $\eta\in \Lambda^1(T^{2k-1}M)$ such that
$\theta_L=i_{J^k}\eta$. For $\alpha \in \{k, ..., 2k-1\}$, we have
that $J^k(\mathbb{C}_{\alpha})=0$. Therefore,
$ i_{\mathbb{C}_{\alpha}}\theta_L=i_{J^k(\mathbb{C}_{\alpha})}\eta=0$
and hence we proved all formulae \eqref{ictl} 
\end{proof}
The $1$-forms $i_{J^{\alpha}}\theta_L\in \Lambda^1(T^{2k-1}M)$,
$\alpha\in \{0,...,k-1\}$ are semi-basic $1$-forms of order
$k-\alpha$.

We prove in the next proposition that some
homogeneity properties of a regular Lagrangian are inherited by its
canonical semispray. 

\begin{prop} \label{prop:homls} Consider $L$ a regular Lagrangian
  of order $k$ such that $\mathbb{C}_1(L)=aL$, for $a\neq 1$, and let
  $S$ be its canonical semispray of order $2k-1$. It follows that
  $[\mathbb{C}_1, S]=S$. \end{prop}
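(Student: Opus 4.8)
The plan is to start from the defining equation of the canonical semispray, namely the first equation in \eqref{isoel}, $\mathcal{L}_S\theta_L=dL$, and to show that the vertical defect $U_1$ occurring in the bracket formula $[\mathbb{C}_1,S]=S+U_1$ of Lemma \ref{lem:cas} i) must vanish. First I would apply $\mathcal{L}_{\mathbb{C}_1}$ to both sides of $\mathcal{L}_S\theta_L=dL$ and commute the two Lie derivatives, which gives
\begin{eqnarray*}
\mathcal{L}_S\mathcal{L}_{\mathbb{C}_1}\theta_L + \mathcal{L}_{[\mathbb{C}_1,S]}\theta_L = d\,\mathbb{C}_1(L) = a\,dL,
\end{eqnarray*}
the homogeneity hypothesis $\mathbb{C}_1(L)=aL$ being used on the right-hand side.

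The next step feeds two identities into this relation. From the first formula of Lemma \ref{lem:ictl} together with the linearity of the Poincar\'e--Cartan construction $f\mapsto\theta_f$ one gets $\mathcal{L}_{\mathbb{C}_1}\theta_L=\theta_{\mathbb{C}_1(L)-L}=(a-1)\theta_L$, and therefore $\mathcal{L}_S\mathcal{L}_{\mathbb{C}_1}\theta_L=(a-1)\,dL$. Writing $[\mathbb{C}_1,S]=S+U_1$ with $U_1\in\mathfrak{X}^{V_{2k-1}}(T^{2k-1}M)$ and invoking $\mathcal{L}_S\theta_L=dL$ once more yields $\mathcal{L}_{[\mathbb{C}_1,S]}\theta_L=dL+\mathcal{L}_{U_1}\theta_L$. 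Substituting both expressions into the displayed identity, the explicit $a\,dL$ contributions cancel and I am left with the single equation $\mathcal{L}_{U_1}\theta_L=0$.

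The main obstacle is the final step: deducing $U_1=0$ from $\mathcal{L}_{U_1}\theta_L=0$. For this I would use Cartan's formula $\mathcal{L}_{U_1}\theta_L=i_{U_1}d\theta_L+d\,i_{U_1}\theta_L$. Because $U_1$ is vertical of order $2k-1$ and the distributions are nested as $V_{2k-1}\subset V_k$, while $\theta_L$ is semi-basic of order $k$ by Lemma \ref{pcel} i), we have $i_{U_1}\theta_L=0$; hence $\mathcal{L}_{U_1}\theta_L=i_{U_1}d\theta_L=-i_{U_1}\omega_L$. Since $L$ is regular, $\omega_L=-d\theta_L$ is symplectic by Definition \ref{regularlk}, so $i_{U_1}\omega_L=0$ forces $U_1=0$, whence $[\mathbb{C}_1,S]=S$. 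I remark that the explicit factors of $a$ cancelled above, so that the conclusion rests on the non-degeneracy of $\omega_L$ rather than on the precise value of $a$, the hypothesis $a\neq1$ being the regime compatible with regularity of $L$.
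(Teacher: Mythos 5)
Your argument is correct, and it reaches the conclusion by a route that is genuinely different from the paper's, though built from the same two pillars. The paper works at the level of the two-form: using $\mathcal{L}_S\omega_L=0$ it computes $i_{[\mathbb{C}_1,S]}\omega_L=\mathcal{L}_S(\mathcal{L}_{\mathbb{C}_1}\theta_L-di_{\mathbb{C}_1}\theta_L)$, substitutes $\mathcal{L}_{\mathbb{C}_1}\theta_L=(a-1)\theta_L$ together with the expression \eqref{eal} for the energy (obtained from the second formula \eqref{ictl} with $\alpha=1$), and arrives at $i_{[\mathbb{C}_1,S]}\omega_L=d\mathcal{E}_L=i_S\omega_L$, whence $[\mathbb{C}_1,S]=S$ by non-degeneracy. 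You instead stay at the level of $\theta_L$, isolate the vertical defect $U_1$ from the decomposition \eqref{cas} of Lemma \ref{lem:cas} i), and reduce the whole statement to $\mathcal{L}_{U_1}\theta_L=0$, which you convert into $i_{U_1}\omega_L=0$ via Cartan's formula and the observation that $U_1\in\mathfrak{X}^{V_{2k-1}}(T^{2k-1}M)\subset\mathfrak{X}^{V_k}(T^{2k-1}M)$ annihilates the semi-basic $1$-form $\theta_L$ of order $k$. The ingredients you need are the first formula of Lemma \ref{lem:ictl} (together with the $\mathbb{R}$-linearity of $L\mapsto\theta_L$, immediate from \eqref{tl}) and the symplecticity of $\omega_L$; you avoid the energy function and the second formula \eqref{ictl} entirely, at the price of invoking \eqref{cas} and the semi-basicity argument $i_{U_1}\theta_L=0$. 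Each version is about as long as the other, and, as you note, neither uses $a\neq1$ in the algebra; that hypothesis only serves to keep the homogeneity assumption compatible with regularity.
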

\begin{proof}
Since $L$ is a regular Lagrangian of order $k$ it follows that the semispray $S\in \mathfrak{X}(T^{2k-1}_0M)$ is the unique solution of
the second equation \eqref{isoel}. Using the fact that
$\mathcal{L}_S\omega_L =0$, it follows that 
\begin{eqnarray}
i_{[\mathbb{C}_1, S]}\omega_L  = i_{\mathbb{C}_1}\mathcal{L}_S\omega_L -
\mathcal{L}_S i_{\mathbb{C}_1} \omega_L = \mathcal{L}_S
i_{\mathbb{C}_1} d\theta_L = \mathcal{L}_S
\left(\mathcal{L}_{\mathbb{C}_1}\theta_L - di_{\mathbb{C}_1}
  \theta_L\right) . \label{ic1so1}
\end{eqnarray}
If we use first formula \eqref{ictl} and the homogeneity condition
$\mathbb{C}_1(L)=aL$ we obtain $\mathcal{L}_{\mathbb{C}_1}\theta_L
=(a-1)\theta_L$. We replace this and first formula \eqref{isoel} in
\eqref{ic1so1}. It follows 
\begin{eqnarray}
i_{[\mathbb{C}_1, S]}\omega_L  = (a-1)\mathcal{L}_S
\theta_L - d\mathcal{L}_Si_{\mathbb{C}_1}
  \theta_L = (a-1)dL - d\mathcal{L}_Si_{\mathbb{C}_1}
  \theta_L  . \label{ic1so2}
\end{eqnarray}
Using second formula \eqref{ictl}, for $\alpha=1$, we obtain the
following expression for the energy Lagrangian function
$\mathcal{E}_L$, which is given by formula \eqref{el}
\begin{eqnarray}
\mathcal{E}_L=\mathbb{C}_1(L)-\mathcal{L}_Si_{\mathbb{C}_1}\theta_L -
L= (a-1)L -
\mathcal{L}_Si_{\mathbb{C}_1}\theta_L. \label{eal}\end{eqnarray} 
We replace the expression for $\mathcal{L}_Si_{\mathbb{C}_1}\theta_L$
from above formula in \eqref{ic1so2} and obtain 
\begin{eqnarray*}
i_{[\mathbb{C}_1, S]}\omega_L = d\mathcal{E}_L= i_{
  S}\omega_L. \end{eqnarray*}
Since $\omega_L$ is a symplectic structure it follows that $[\mathbb{C}_1, S]=S$. 
\end{proof}

For the case $k=1$, above formulae show that the homogeneity of a
regular Lagrangian transfers to the canonical Euler-Lagrange vector
field, which makes it into a spray. 

\section{Projective metrizability for homogeneous higher order
  differential equation fields} \label{sec:pm}

A particular aspect of the inverse problem of the calculus of
variations deals with homogeneous systems of differential
equations. For $k=1$, this problem is known as the projective
metrizability problem, or as the Finslerian version of Hilbert's
fourth problem \cite{alvarez05, crampin08, cms12, saunders12, szilasi07}. The most
important aspect that is needed to formulate and address the
projective metrizability problem for $k>1$ relies on a correct
definition of homogeneity for systems of higher order differential equations and corresponding Lagrangians. We believe
that such definition of homogeneity is that proposed by Crampin and
Saunders in \cite{CS11}, which we use in this paper. In this section
we formulate and discuss some aspects regarding the projective
metrizability problem for the case $k>1$, extending some results 
obtained in \cite{BD09, BM11a} for $k=1$. 

\subsection{Higher order Finsler spaces} \label{subsec:hofs}

For $k=1$, a Finsler function is characterized by the following important
aspect: its variational problem uniquely determines a class of
projectively related systems of second order ordinary differential
equations. This property is due to  the fact that a Finsler
function satisfies some homogeneity condition and a regularity
condition. Inspired by the work of Crampin and Saunders \cite{CS11}, we propose
the following definition for a Finsler function of order $k>1$.

\begin{defn} \label{finslerk}
A positive function $F\in C^{\infty}(T^k_0M)$ is called a \emph{Finsler function of
order $k$} if 
\begin{itemize} \item[i)] it satisfies the Zermelo conditions:
\begin{eqnarray}  \mathbb{C}_1(F)=F, \quad \mathbb{C}_{\alpha}(F)=0, \
  \forall \alpha \in\{2,...,k\}, \label{zermelo} \end{eqnarray}
\item[ii)] the tensor with components
\begin{eqnarray}
h_{ij}=F^{2k-1}\frac{\partial^2 F}{\partial y^{(k)i}\partial
  y^{(k)j}} \label{hij} \end{eqnarray}
has rank $n-1$ on $T^k_0M$.
\end{itemize}
\end{defn}
A Lagrangian $L$ on $T^k_0M$ that satisfies the Zermelo conditions \eqref{zermelo}
in Definition \ref{finslerk} is called \emph{parametric Lagrangian} in
\cite[\S 4]{CS11} since the solutions of the corresponding variational
problem are invariant under orientation preserving
reparameterization. The Zermelo conditions and the invariance under
reparameterizations for the integral curves of some higher order
differential equations, as well as  their relation with the variational
equations related to Finsler geometry, has been studied very recently
by Urban and Krupka in \cite{uk13}. 

Spaces with functions that satisfy the Zermelo conditions \eqref{zermelo} as well
as the regularity condition ii) of Definition \ref{finslerk} where
studied by Kawaguchi, \cite{kawaguchi62}, and also referred to as
\emph{Kawaguchi spaces}.

Definition \ref{finslerk} reduces to the classic definition of a
Finsler space when $k=1$, and the tensor \eqref{hij} becomes the
angular metric tensor \cite[\S 16]{matsumoto86}. Indeed, if $k=1$, we
have that the tensor \eqref{hij} satisfies
\begin{eqnarray*} 
h_{ij} = \frac{1}{2}\frac{\partial^2F^2}{\partial y^i\partial y^j} -
\frac{\partial F}{\partial y^i} \frac{\partial F}{\partial
  y^j}. \label{hgij} \end{eqnarray*}
It is well known that $\textrm{rank}(h_{ij})=n-1$ if and only if
$\textrm{rank}({\partial^2F^2}/{\partial y^i\partial y^j})=n$,
\cite[\S 16]{matsumoto86}. Due to a recent result by Lovas
\cite{lovas07}, the regularity condition
$\textrm{rank}({\partial^2F^2}/{\partial y^i\partial y^j})=n$ and the
positivity of the Finsler function $F$ is equivalent to the fact that
Hessian matrix of $F^2$,  $g_{ij}={\partial^2F^2}/{\partial y^i\partial y^j}$
is positive definite. Using \cite[Section 3]{cms12} or \cite[Section
3]{saunders12} the Hessian matrix of $F^2$ is positive definite if and
only if the Hessian matrix of $F$ is positive quasi-definite. 

\begin{defn} \label{def:homtheta}
A $1$-form $\theta \in \Lambda^1(T^{2k-1}_0M)$ is called
\emph{homogeneous} if it satisfies the formulae
\begin{eqnarray}
\label{icat} i_{\mathbb{C}_{\alpha}}\theta & =& 0, \quad
\mathcal{L}_{\mathbb{C}_{\alpha}}\theta=0, \quad  \forall
\alpha\in \{1,..., 2k-1\}.
\end{eqnarray}
\end{defn}

Due to the homogeneity conditions of a Finsler function of order $k$,
the energy function $\mathcal{E}_F$ and the
Poincar\'e-Cartan forms $\theta_F$ and $\omega_F=-d\theta_F$ have
special properties. These properties are presented in the next lemma. 

Last part of the next lemma also shows that the regularity condition ii) in
Definition \ref{finslerk} is equivalent to
$\operatorname{rank}(d\theta_F) =2k(n-1)$, which is the regularity
condition for parametric Lagrangians considered by Crampin and
Saunders in \cite{CS11}. 

\begin{lem} \label{lem:fthom}
Consider $F\in C^{\infty}(T^k_0M)$ a positive function that satisfies the
Zermelo conditions \eqref{zermelo} and $S\in
\mathfrak{X}(T^{2k-1}_0M)$ a semispray, solution of the equation
$\mathcal{L}_S\theta_F=dF$. 
\begin{itemize} \item[i)] The  Poincar\'e-Cartan
$1$-form $\theta_F$ satisfies the homogeneity conditions \eqref{icat}.
\item[ii)] The following formulae are true
\begin{eqnarray}
\label{istf} i_S\theta_F & = & F, \quad
\mathcal{E}_F=0.  \\
\label{icaof} i_{\mathbb{C}_{\alpha}}\omega_F &=& 0, \  \forall
\alpha\in \{1,..., 2k-1\}.
\end{eqnarray}
\item[iii)] $F$ is a Finsler function of order $k$ if and only if
  $\operatorname{rank}(d\theta_F) =2k(n-1)$. 
\end{itemize}
\end{lem}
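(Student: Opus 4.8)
The three parts build on each other, so the plan is to establish them in order, using the homogeneity machinery from Lemma \ref{lem:cas} and the compatibility formulae of Lemma \ref{lem:ictl}.

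For part i), I want to verify the two families of conditions in \eqref{icat}. The vanishing $i_{\mathbb{C}_{\alpha}}\theta_F=0$ for $\alpha\in\{k,\dots,2k-1\}$ is already the last line of \eqref{ictl}, so nothing new is needed there. For $\alpha\in\{1,\dots,k-1\}$ I would use the middle formula of \eqref{ictl}, namely $i_{\mathbb{C}_{\alpha}}\theta_F=\alpha!\sum_{\beta=1}^{k-\alpha}\frac{(-1)^{\beta-1}}{(\alpha+\beta)!}\mathcal{L}_S^{\beta-1}(\mathbb{C}_{\alpha+\beta}(F))$, and observe that each term contains a factor $\mathbb{C}_{\alpha+\beta}(F)$ with $\alpha+\beta\in\{2,\dots,k\}$, which vanishes by the Zermelo conditions \eqref{zermelo}. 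This kills $i_{\mathbb{C}_{\alpha}}\theta_F$ for all $\alpha\in\{1,\dots,2k-1\}$. For the Lie-derivative conditions $\mathcal{L}_{\mathbb{C}_{\alpha}}\theta_F=0$, the plan is to combine part iv) of Lemma \ref{lem:cas} (which guarantees $\mathcal{L}_{\mathbb{C}_{\alpha}}\theta_F$ is again semi-basic of order $k$) with the first formula of \eqref{ictl} specialized via homogeneity: since $\mathbb{C}_1(F)=F$, the function $\mathbb{C}_1(F)-F=0$, so $\mathcal{L}_{\mathbb{C}_1}\theta_F=\theta_{\mathbb{C}_1(F)-F}=0$. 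For $\alpha\geq 2$ I would instead apply $i_{J^\gamma}$ to $\mathcal{L}_{\mathbb{C}_{\alpha}}\theta_F$ and use the commutation rule together with \eqref{cajb} and the already-established $i_{\mathbb{C}_{\alpha}}$-vanishing to force all semi-basic components to zero; alternatively one reruns the argument of Lemma \ref{lem:ictl} with the appropriate Liouville field and invokes the Zermelo conditions to make the right-hand side vanish.

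For part ii), the formula $i_S\theta_F=F$ is exactly \eqref{istl} (the computation proving \eqref{elis}) evaluated on $F$: that computation gives $i_S\theta_F=\sum_{\alpha=1}^k\frac{(-1)^{\alpha-1}}{\alpha!}\mathcal{L}_S^{\alpha-1}\mathbb{C}_{\alpha}(F)$, and by the Zermelo conditions only the $\alpha=1$ term survives, yielding $\mathbb{C}_1(F)=F$. Then $\mathcal{E}_F=i_S\theta_F-F=0$ by \eqref{elis}. For \eqref{icaof} I would write $i_{\mathbb{C}_{\alpha}}\omega_F=-i_{\mathbb{C}_{\alpha}}d\theta_F=-\mathcal{L}_{\mathbb{C}_{\alpha}}\theta_F+d\,i_{\mathbb{C}_{\alpha}}\theta_F$ using Cartan's formula, and both terms on the right vanish by part i). This is a short deduction once part i) is in hand.

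For part iii), the claim is that the rank condition rank$(h_{ij})=n-1$ on $T^k_0M$ is equivalent to rank$(d\theta_F)=2k(n-1)$. The plan is to relate the Hessian $\partial^2 F/\partial y^{(k)i}\partial y^{(k)j}$ appearing in \eqref{hij} to the rank of $d\theta_F$ by the same bookkeeping as in part iii) of Lemma \ref{pcel}: formula \eqref{pty} computes $\partial\theta_{(k-\alpha)i}/\partial y^{(k+\alpha-1)j}$ as a nonzero multiple of the relevant Hessian, giving rank$(d\theta_F)=2k\cdot$rank(Hessian). The essential extra point for a Finsler function, and the step I expect to be the main obstacle, is that homogeneity forces the Hessian of $F$ to be degenerate in the radial direction: differentiating the Euler relation coming from $\mathbb{C}_1(F)=F$ (and the higher Zermelo conditions) shows that the vector built from the lower-order Liouville data lies in the kernel of $\partial^2 F/\partial y^{(k)i}\partial y^{(k)j}$, so this Hessian can have rank at most $n-1$. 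Consequently the maximal possible rank is $n-1$, the factor $F^{2k-1}>0$ in \eqref{hij} does not change the rank, and rank$(h_{ij})=n-1$ is equivalent to the Hessian attaining its maximal rank $n-1$, hence to rank$(d\theta_F)=2k(n-1)$. The delicate part is carefully extracting the radial kernel vector from the Zermelo conditions and confirming it is exactly the single forced degeneracy; once that is done, the equivalence follows from the rank formula as in Lemma \ref{pcel}.
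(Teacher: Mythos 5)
Your proposal follows essentially the same route as the paper: parts i) and ii) via the formulae of Lemma \ref{lem:ictl} and \eqref{istl} combined with the Zermelo conditions, and part iii) via the rank bookkeeping of Lemma \ref{pcel} iii) together with the key observation that $\mathbb{C}_k(F)=0$ forces $h_{ij}y^{(1)j}=0$, so that $n-1$ is the maximal attainable rank. The only caveat is that your first suggested route for $\mathcal{L}_{\mathbb{C}_{\alpha}}\theta_F=0$, $\alpha\geq 2$ (contracting with $i_{J^{\gamma}}$ and using only the $i_{\mathbb{C}_{\alpha}}$-vanishing) would not suffice on its own, but your stated alternative --- rerunning the argument of Lemma \ref{lem:ictl}, i.e.\ applying $\mathcal{L}_{\mathbb{C}_{\alpha}}$ to $\mathcal{L}_S\theta_F=dF$ and invoking part v) of Lemma \ref{lem:cas} inductively in $\alpha$ --- is exactly what the paper does.
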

\begin{proof}
Using second formulae \eqref{ictl} it follows
$i_{\mathbb{C}_{\alpha}}\theta_F=0$ if and only if
$\mathbb{C}_{\alpha+1}(F)=0$ for all $\alpha \in \{1,..., k-1\}$. 

Since $F$ satisfies the Zermelo conditions \eqref{zermelo} it follows
$i_{\mathbb{C}_{\alpha}}\theta_F=0$  for all $\alpha \in \{1,...,
k-1\}$. Last formulae \eqref{ictl} show that $i_{\mathbb{C}_{\alpha}}\theta_F=0$  for all $\alpha \in \{k,...,
2k-1\}$.

First formula \eqref{ictl} shows that the Zermelo condition
$\mathbb{C}_1(F)=F$ implies $\mathcal{L}_{\mathbb{C}_1}\theta_F=0$. 

$S\in \mathfrak{X}(T^{2k-1}_0M)$ is a semispray and satisfies the equation
$\mathcal{L}_S\theta_F=dF$. For $\alpha\geq 2$, we apply
$\mathcal{L}_{\mathbb{C}_{\alpha}}$ to both sides of this equation. It follows
\begin{eqnarray}
\mathcal{L}_S\mathcal{L}_{\mathbb{C}_{\alpha}}\theta_F+
\mathcal{L}_{[\mathbb{C}_{\alpha}, S]}\theta_F =
d\mathbb{C}_{\alpha}(F)=0. \label{lslcat1} \end{eqnarray}
Using formula \eqref{cas}, for each $\alpha \in \{2,..., 2k-1\}$ there
exists $U_{\alpha} \in \mathfrak{X}^{V_{2k-1}}(T^{2k-1}_0M)$ such that
$[\mathbb{C}_{\alpha}, S]=\alpha \mathbb{C}_{\alpha-1} + U_{\alpha}$.
We replace this in formula \eqref{lslcat1} and obtain
\begin{eqnarray}
\mathcal{L}_S\mathcal{L}_{\mathbb{C}_{\alpha}}\theta_F+
\alpha\mathcal{L}_{\mathbb{C}_{\alpha-1}}\theta_F =
-\mathcal{L}_{U_{\alpha}}\theta_F, \forall \alpha \in \{2,..., 2k-1\}. \label{lslcat2} \end{eqnarray}
Using a similar argument that we have used in the proof of Lemma
\ref{lem:ictl} it follows that $\mathcal{L}_{U_{\alpha}}\theta_F$ are
semi-basic $1$-forms of order $1$, for all $\alpha \in \{2,...,
2k-1\}$. For $\alpha=2$, in formula \eqref{lslcat2}, it follows
that $\mathcal{L}_S\mathcal{L}_{\mathbb{C}_{2}}\theta_F$ is a
semi-basic $1$-form of order $1$. Item v) of Lemma \ref{lem:cas}
implies $\mathcal{L}_{\mathbb{C}_{2}}\theta_F=0$. We continue with
$\alpha \in \{3,..., 2k-1\}$ in formula \eqref{lslcat2}, use a
similar argument as above, and obtain $\mathcal{L}_{\mathbb{C}_{\alpha}}\theta_F=0$.

For $\theta_F$, the Poincar\'e-Cartan $1$-form of a Finsler
function $F$, given by formula \eqref{tl}, we use formula \eqref{istl},
as well as the Zermelo conditions \eqref{zermelo}, to obtain
$i_S\theta_F=\mathbb{C}_1(F)=F$, which is first formula
\eqref{istf}. These considerations and formula \eqref{elis} imply that
second formula \eqref{istf} is true. 

The Poincar\'e-Cartan $1$-form is homogeneous, which means that it
satisfies formulae \eqref{icat}. The two formulae \eqref{icat} imply that formulae
\eqref{icaof} are true as well.

iii) We have seen already that the $\{S, \mathbb{C}_1, ...,
\mathbb{C}_{2k-1}\}\subset \operatorname{Ker}{\omega_F}$. Based on
this aspect and using a similar argument we did use for the proof of third
item in Lemma \ref{pcel}, formula \eqref{rankol} has the following
correspondent
\begin{eqnarray}
2k(n-1)\geq \textrm{rank}(\omega_F) \geq 2k \cdot 
\textrm{rank}(h_{ij}). \label{rankof} \end{eqnarray}   
We assume now that $F$ is a Finsler function of order $k$, which means that it
satisfies the regularity condition ii) of Definition
\ref{finslerk}. From formula \eqref{rankof} it follows that
$\textrm{rank}(\omega_F)=2k(n-1)$, which is the regularity condition
for parametric Lagrangians in \cite{CS11}.

We prove the other implication by contradiction. We assume that
$\textrm{rank}(\omega_F)=2k(n-1)$ and that
$\textrm{rank}(h_{ij})<(n-1)$. From
the Zermelo condition $\mathbb{C}_k(F)=0$ we obtain that
$h_{ij}y^{(1)j}=0$. Therefore, in view of our assumption, there exist
the functions $X^j\neq Py^{(1)j}$ that satisfy $h_{ij}X^j=0$. It
follows that the non-zero vector field $X=X^j{\partial}/{\partial y^{(2k-1)j}}$
  satisfies $i_X\omega_F$, which contradicts the assumption that
  $\textrm{rank}(\omega_F)=2k(n-1)$.  
\end{proof}
The homogeneity properties of the Poincar\'e-Cartan forms $\theta_F$
and $\omega_F$ were proven in a different context in Proposition 6.1 and Theorem 6.4
of \cite{CS04}. 

\subsection{Higher order projective metrizability} \label{subsec:hopm}

In this subsection we formulate and discuss the projective
metrizability problem for homogeneous higher order systems. We show
first that the variational problem of a Finsler function of order $k$
uniquely determines a projective class of homogeneous higher order
systems. Then, we characterize the metrizability of a homogeneous higher order
systems in terms of some homogeneous semi-basic $1$-forms. 

\begin{defn} \label{def:pm} A homogeneous semispray $S\in \mathfrak{X}(T^{2k-1}_0M)$
  is said to be  \emph{projectively metrizable} if its geodesics
  coincide with the solutions of the Euler-Lagrange equations of some
  (locally defined) Finsler function $F$, up to an orientation preserving
  reparameterization.
\end{defn}

The variational problem for a regular Lagrangian on
$T^kM$ uniquely determines a Lagrangian semispray of order $2k-1$. 
In Theorem \ref{thm:ls} we gave characterizations for a
semispray $S$ to be a Lagrangian semispray.

We will see now that in the
case of a Finsler function of order $k$, the variational problem
uniquely determine a projective class of sprays. In the next theorem,
which represents the homogeneous version on Theorem \ref{thm:ls}, we provide
characterizations of projectively metrizable homogeneous semisprays in terms of
homogeneous semi-basic $1$-forms, extending the case $k=1$ studied in
\cite[\S 4.3]{BD09}.

\begin{thm} \label{thm:pm} Consider $S$ a homogeneous semispray of order
  $2k-1$. \begin{itemize}  \item[i)]  $S$ is projectively
  metrizable if and only if  it satisfies either one of the following equivalent two
equations  
\begin{eqnarray} \mathcal{L}_S\theta_F=dF, \quad
  i_S\omega_F=0, \label{isoef} \end{eqnarray} for some (locally
defined) Finsler functions $F$ of order $k$. \item[ii)] $S$ is projectively
  metrizable if and only if there exists a (locally defined) homogeneous semi-basic $1$-form
  $\theta$ on $T^{2k-1}M$ of order $k$, such
  $\operatorname{rank}(d\theta)=2k(n-1)$ and the $1$-form $\mathcal{L}_S\theta$ is
  closed.
  \end{itemize}
\end{thm}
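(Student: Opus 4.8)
The plan is to follow the same strategy as the proof of Theorem \ref{thm:ls}, but now carrying the homogeneity data through each step. The key point is that projective metrizability, by Definition \ref{def:pm}, means the geodesics of $S$ coincide (up to orientation-preserving reparameterization) with the Euler-Lagrange solutions of some Finsler function $F$ of order $k$. By the projective equivalence results of Crampin and Saunders (\cite[Thm. 5.2]{CS11}), this is equivalent to saying that $S$ is projectively equivalent to the canonical spray of $F$, i.e. $S$ differs from a Lagrangian-type spray of $F$ by a multiple of $\mathbb{C}_r$. So the first step is to translate the geodesic-coincidence statement of Definition \ref{def:pm} into the algebraic condition that there is a Finsler function $F$ with $\mathcal{L}_S\theta_F - dF$ lying in the span of the $dx^i$ appropriately, and that this reduces (using that $i_{\mathbb{C}_r}\theta_F = 0$ and the projective freedom to adjust $S$ by $P\mathbb{C}_r$) precisely to the first equation in \eqref{isoef}.

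For part i), once the geodesic-coincidence is recast as $\mathcal{L}_S\theta_F = dF$, I would establish the equivalence of the two equations in \eqref{isoef} directly from Lemma \ref{lem:fthom}. Indeed, by \eqref{istf} we have $\mathcal{E}_F = 0$ and $i_S\theta_F = F$, so that $d\mathcal{E}_F = 0$; combining this with the general identity \eqref{elis}\,/\,\eqref{isoel} adapted to $F$ (noting $\mathcal{E}_F = i_S\theta_F - F = 0$) collapses the second equation of \eqref{isoef} from $i_S\omega_F = d\mathcal{E}_F$ to simply $i_S\omega_F = 0$. Thus the two equations in \eqref{isoef} coincide exactly as in the proof of Theorem \ref{thm:ls} part i), with the right-hand side of the energy equation vanishing because of the Zermelo conditions \eqref{zermelo}. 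The forward direction then says a projectively metrizable $S$ solves \eqref{isoef}; the converse says that if $S$ solves \eqref{isoef} for a Finsler $F$, then $S$ is (up to the projective class) the Euler-Lagrange field of $F$ and hence its geodesics are the reparameterized extremals of $F$.

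For part ii), I would mirror Theorem \ref{thm:ls} part ii). For the direct implication, given $S$ projectively metrizable, take $\theta = \theta_F$: by Lemma \ref{lem:fthom}(i) it is homogeneous, by Lemma \ref{lem:fthom}(iii) the regularity condition ii) of Definition \ref{finslerk} is equivalent to $\operatorname{rank}(d\theta_F) = 2k(n-1)$, and $\mathcal{L}_S\theta_F = dF$ is closed. For the converse, suppose a homogeneous semi-basic $1$-form $\theta$ of order $k$ satisfies $\operatorname{rank}(d\theta) = 2k(n-1)$ and $\mathcal{L}_S\theta$ closed, hence locally $\mathcal{L}_S\theta = dF$ for some function $F$. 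As in Theorem \ref{thm:ls}, the weaker fact that $i_J\mathcal{L}_S\theta = d_JF$ together with Lemma \ref{lem:cas}(v) shows $F$ restricts to $T^kM$ and $\theta = \theta_F$ is its Poincar\'e-Cartan $1$-form. The new work is to extract the Zermelo conditions \eqref{zermelo} and positivity for $F$ from the homogeneity \eqref{icat} of $\theta$: applying $i_{\mathbb{C}_\alpha}$ and $\mathcal{L}_{\mathbb{C}_\alpha}$ to $\theta = \theta_F$ and invoking the formulae \eqref{ictl} from Lemma \ref{lem:ictl} should force $\mathbb{C}_\alpha(F) = 0$ for $\alpha \geq 2$ and $\mathbb{C}_1(F) = F$; the rank condition $\operatorname{rank}(d\theta) = 2k(n-1)$ then gives regularity ii) via Lemma \ref{lem:fthom}(iii).

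I expect the main obstacle to be the converse of part ii): carefully deducing the full Zermelo system \eqref{zermelo} for $F$ from the homogeneity equations \eqref{icat} on $\theta = \theta_F$. The conditions $i_{\mathbb{C}_\alpha}\theta_F = 0$ give, through \eqref{ictl}, a triangular system of relations among the $\mathbb{C}_\beta(F)$ with Lie-derivative coefficients $\mathcal{L}_S^{\beta-1}$, and one must solve this system to isolate each $\mathbb{C}_\alpha(F)$ rather than merely combinations of them; I anticipate needing the second homogeneity condition $\mathcal{L}_{\mathbb{C}_\alpha}\theta = 0$ and an inductive argument on $\alpha$ (descending or ascending) to close the loop, much as the inductive extraction of \eqref{tabg} in Lemma \ref{lem:cas}(v). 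The positivity of $F$ is the one genuinely non-algebraic ingredient; as in the $k=1$ case, it should be arranged as part of the local choice of the primitive $F$ of the closed form $\mathcal{L}_S\theta$, exploiting the homogeneity to fix the additive constant consistently.
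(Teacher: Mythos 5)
Your plan for part i) and for the direct implication of part ii) matches the paper's proof: the equivalence of the two equations \eqref{isoef} via \eqref{istf} and \eqref{elis}, the reduction of geodesic coincidence to the Euler--Lagrange equation as in Theorem \ref{thm:ls}, and, for the converse of part i), the observation that the regularity of $h_{ij}$ forces any two homogeneous solutions of \eqref{isoef} to be projectively equivalent. Your descending extraction of $\mathbb{C}_{\alpha}(F)=0$, $\alpha\geq 2$, from the triangular system $i_{\mathbb{C}_{\alpha}}\theta_F=0$ together with \eqref{ictl} is also exactly what the paper does.

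The gap is in the converse of part ii), precisely at the point you defer as ``fixing the additive constant''. Taking an arbitrary local primitive $F$ of the closed form $\mathcal{L}_S\theta$ leaves $F$ determined only up to a constant, and the first Zermelo condition $\mathbb{C}_1(F)=F$ cannot be recovered from the homogeneity of $\theta$ alone: by the first formula \eqref{ictl}, the condition $\mathcal{L}_{\mathbb{C}_1}\theta_F=0$ only gives $\theta_{\mathbb{C}_1(F)-F}=0$, i.e.\ that $\mathbb{C}_1(F)-F$ has vanishing Poincar\'e--Cartan $1$-form, which only forces $\mathbb{C}_1(F)-F$ to be a basic function, not to vanish. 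The paper closes this by a separate and substantive computation (formulae \eqref{lst1}--\eqref{lst3}) showing that $\mathcal{L}_S\theta=d\,i_S\theta$, so that the primitive may be taken to be $F=i_S\theta$; this is where the homogeneity of the \emph{semispray} $S$ (via $[J,S]S=S+P_1\mathbb{C}_{2k-1}$) and the homogeneity conditions $i_{\mathbb{C}_1}d\theta=0$ and $\mathcal{L}_{\mathbb{C}_{2k-1}}\theta=0$ actually enter, and it is absent from your outline. With $F=i_S\theta=i_S\theta_F$ in hand, formula \eqref{istl} combined with $\mathbb{C}_{\alpha}(F)=0$ for $\alpha\geq 2$ gives $F=\mathbb{C}_1(F)$ at once, and the rank hypothesis plus Lemma \ref{lem:fthom}(iii) finishes the argument. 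Without identifying the primitive as $i_S\theta$, your construction produces a function satisfying the Zermelo conditions only modulo a basic function, which is not enough to conclude that $F$ is a Finsler function of order $k$.
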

\begin{proof}
i) In view of the two formulae \eqref{istf} we have that the two equations \eqref{isoef} are
equivalent. 

For the direct implication, we assume that $S$ is
projectively metrizable. Then, the semispray $S$ satisfies the equation
\begin{eqnarray}
\frac{\partial F}{\partial x^i} - \frac{1}{1!} S\left(\frac{\partial
    F}{\partial y^{(1)i}}\right) + \cdots + \frac{(-1)^k}{k!} S^k\left(\frac{\partial
    F}{\partial y^{(k)i}}\right) =0, \label{efs}
\end{eqnarray} for some Finsler function $F$ on $T^kM$. Using similar
arguments as we did use in the proof of Theorem \ref{thm:ls}, it
follows that equation \eqref{efs} is equivalent to first equation \eqref{isoef}.

For the converse implication, consider $F$ a
Finsler function of order $k$. We assume that the semispray $S$ is a solution of the second
equation \eqref{isoef}.  Locally, first equation \eqref{isoef} is
equivalent to \begin{eqnarray}
(-1)^k {2k \choose k} h_{ij}G^j =  \frac{\partial F}{\partial x^i} - \frac{1}{1!} d_T\left(\frac{\partial
    F}{\partial y^{(1)i}}\right) + \cdots + \frac{(-1)^k}{k!} d^k_T\left(\frac{\partial
    F}{\partial y^{(k)i}}\right). \label{hijgj} 
\end{eqnarray} It follows that two  homogeneous semisprays $S_1$ and $S_2$ are solutions of
either one of the two equations \eqref{isoef} if and only if the
semispray coefficients $G^i_1$ and $G^i_2$ satisfy
\begin{eqnarray}
h_{ij}(G^i_1-G^i_2)=0. \label{hijg12}
\end{eqnarray}
The regularity condition for the Finsler function $F$ implies that the
only solutions of equation \eqref{hijg12} are given by
$G^i_1-G^i_2=Py^{(1)i}$, for some function $P\in
C^{\infty}(T^{2k-1}_0M)$, and hence the two  homogeneous semisprays $S_1$ and $S_2$ are
projectively equivalent.  Therefore, equations \eqref{isoef} uniquely
determine the projective class of a  homogeneous semispray $S$, and this  homogeneous semispray is
projectively metrizable. 

ii) For the first implication  we assume that the  homogeneous semispray $S$ is
projectively metrizable. Therefore, it satisfies first equation
\eqref{isoef}, for some (locally defined) Finsler function $F$ of order $k$. We
consider $\theta=\theta_F$, the Poincar\'e-Cartan $1$-form of $F$. We
have that $\theta\in \Lambda^1(T^{2k-1}_0M)$ is a homogeneous,
semi-basic $1$-form of order $k$, the $1$-form $\mathcal{L}_S \theta$ is
closed, and according to iii) of Lemma \ref{lem:fthom} we have $\textrm{rank}(d\theta)=2k(n-1)$.

For the converse implication, consider a homogeneous semi-basic $1$-form
  $\theta \in \Lambda^1(T^{2k-1}_0M)$ of order $k$, such
  $\operatorname{rank}(d\theta)=2k(n-1)$ and $\mathcal{L}_S\theta$ is
  a closed $1$-form. 

We will prove first that the condition $\mathcal{L}_S\theta$ is closed
implies $\mathcal{L}_S\theta=di_S\theta$. Since $S$ is a homogeneous
semispray  of
order $2k-1$ it follows $[J,S]S=S+P_1\mathbb{C}_{2k-1}$, for some
function $P_1 \in C^{\infty}(T^{2k-1}_0M)$. Due to the homogeneity of
the semi-basic $1$-form $\theta$ it follows that
$\mathcal{L}_{P_1\mathbb{C}_{2k-1}} \theta=0$. Using the commutation
rule \cite[1a, p.205]{GM00} it follows 
\begin{eqnarray}
\mathcal{L}_S\theta = \mathcal{L}_{[J,S]S}\theta = i_Sd_{[J,S]}\theta
+ d_{[J,S]} i_S\theta +
i_{[[J,S],S]}\theta. \label{lst1} \end{eqnarray}
Since $\theta$ is a semi-basic $1$-form of order $k$ it follows that
for $k\geq 2$ the $1$-form $\mathcal{L}_S\theta$ is semi-basic of
order $k+1\leq 2k-1$. Using  formula
\eqref{isjbt}  for $\beta=1$  and the commutation
rule \cite[1c, p.205]{GM00} it follows 
\begin{eqnarray}
i_{[[J,S],S]}\theta = i_{[J,S]}\mathcal{L}_S\theta
-\mathcal{L}_Si_{[J,S]}\theta= \mathcal{L}_S\theta -
\mathcal{L}_S\theta =0.\label{lst2} \end{eqnarray}
Formulae \eqref{dil}, \eqref{isjbt}  for $\beta=1$ and the condition
$\mathcal{L}_Sd\theta=0$ imply:
\begin{eqnarray*}
d_{[J,S]}\theta=i_{[J,S]}d\theta - di_{[J,S]}\theta = i_{[J,S]}d\theta
- d\theta = i_J\mathcal{L}_Sd\theta - \mathcal{L}_Si_Jd\theta -
d\theta = - \mathcal{L}_Si_Jd\theta -
d\theta. \end{eqnarray*}
In the above formula we compose with $i_S$, use the homogeneity
condition $i_{\mathbb{C}_1}d\theta=0$ and obtain
\begin{eqnarray}
\nonumber i_Sd_{[J,S]}\theta & =& - \mathcal{L}_Si_Si_Jd\theta - i_Sd \theta= -
\mathcal{L}_S \left( i_Ji_Sd\theta + i_{\mathbb{C}_1}d\theta\right) -
i_Sd \theta =  -
\mathcal{L}_S i_Ji_Sd\theta -
i_Sd \theta \\ 
& = & - i_J\mathcal{L}_S i_S d\theta + i_{[J,S]}i_Sd\theta  - i_Sd
\theta =  i_{[J,S]}i_Sd\theta  - i_Sd
\theta. \label{lst3} \end{eqnarray}
If we replace now formulae \eqref{lst2} and \eqref{lst3} and the fact
that $d_{[J,S]} i_S\theta = i_{[J,S]} di_S\theta$ in formula
\eqref{lst1} we obtain
\begin{eqnarray*}
\mathcal{L}_S\theta = -i_Sd\theta + i_{[J,S]}i_Sd\theta
+ i_{[J,S]} di_S\theta = -i_Sd\theta+ i_{[J,S]}\mathcal{L}_S\theta = 
-i_Sd\theta+ \mathcal{L}_S\theta = di_S\theta.\label{lst4} \end{eqnarray*}
Consider the function $F=i_S\theta$. Above formula shows that the
function $F$ satisfies formula \eqref{lstdl}, for $L=F$, which means
$\mathcal{L}_S\theta =dF$. Since $\mathcal{L}_S\theta\in
\Lambda^1(T^{2k-1}_0M)$ is semi-basic of order $(k+1)$, 
it follows that the function $F$ is
constant along the fibres of the projection $\pi^{2k-1}_k: T^{2k-1}_0M
\to T^k_0M$ and hence we can assume that $F \in
C^{\infty}(T^k_0M)$. Using part v) of Lemma \ref{lem:cas} we obtain that $\theta=\theta_F$. 

We have to prove now that the function $F$ is a
Finsler function. From first formulae \eqref{ictl} it follows that
$i_{\mathbb{C}_{\alpha}}\theta_F=0$ if and only if
$\mathbb{C}_{\alpha+1}(F)=0$ for all $\alpha \in
\{1,....,k-1\}$. Since $\theta$ is homogeneous, we obtain
$\mathbb{C}_2(F)=\cdots =\mathbb{C}_k(F)=0$. These arguments, the
definition of function $F$ and
formula \eqref{istl} imply $F=i_S\theta_F=\mathbb{C}_1(F)$. It follows
  that Zermelo conditions \eqref{zermelo} are satisfied. Finally, we
  have that $\operatorname{rank}(d\theta)=2k(n-1)$, and using part iii) of Lemma \ref{lem:fthom} implies that $F$ is a
  Finsler function of order $k$. Now the condition
  $\mathcal{L}_S\theta_F=dF$ says that $S$ is projectively metrizable.
\end{proof} 

For $k=1$, second equation \eqref{isoef} reduces to Rapcs\'ak equation
\cite[Rap 1]{szilasi07}.

We note that for the converse implication of the first part of Theorem
\ref{thm:pm} we do not need the requirement that the semispray $S$ is
homogeneous. The argument is as follows, and it is due to Crampin and
Saunders \cite[Thm. 4.4]{CS11}. For a semispray $S$, solution of
second equation \eqref{isoef}, using formulae \eqref{icaof} it follows that
$\mathcal{D}=\textrm{span} \{S, \mathbb{C}_1, ....,
\mathbb{C}_{2k-1}\}=\operatorname{Ker} \omega_F$. Since the
Poincar\'e-Cartan $2$-form $\omega_F=-d\theta_F$ is closed it follows
that its  characteristic distribution $\mathcal{D}$ is involutive and
hence $S$ is a  homogeneous semispray.

\section{Examples} \label{sec:examples}

For a Finsler function $F$, of order $k\geq 1$, its variational problem
uniquely determines a projective class of homogeneous semisprays,
solutions of either one of the two equivalent equations \eqref{isoef}. 

For $k=1$, in this projective class of homogeneous semisprays, we can
single out one spray, which is called the geodesic spray. The geodesic
spray is the only semispray determined by the variational problem of
the regular Lagrangian $L=F^2$. Moreover, the geodesic spray is the
only spray, in the projective class determined by the Finsler function
$F$, whose geodesics are parameterized by arc-length. 

When $k>1$, we do not know yet if it is possible, and eventually how, to associate to a
Finsler function $F$ of order $k$, a regular Lagrangian of
order $k$. Therefore, the only option to fix a homogeneous semispray, which was suggested to me by
David Saunders, in the projective class determined by the variational problem of $F$,
is to use the arc-length induced by $F$.

Next we use some examples to discuss the above considerations as well
as the results obtained in the previous sections. 

For a Riemannian metric $g_{ij}(x)$ on a manifold $M$, consider the
functions $L_1, F_1: TM \to \mathbb{R}$, given by
\begin{eqnarray}
L_1(x, y^{(1)})=\frac{1}{2}g_{ij}(x)y^{(1)i}y^{(1)j}=
\frac{1}{2}\|y^{(1)}\|^2_g, \ 
F_1(x,y^{(1)})= \sqrt{g_{ij}(x)y^{(1)i}y^{(1)j}} = \|y^{(1)}\|_g. \label{L1F1} \end{eqnarray}

$L_1$ is a regular Lagrangian of order one, its
Hessian matrix, given by formula \eqref{gij}, is just the Riemannian
metric $g_{ij}(x)$. The variational problem for $L_1$ uniquely
determines a spray $S_1\in \mathfrak{X}(TM)$, which is the geodesic spray for the
Riemannian metric $g_{ij}(x)$. The geodesic spray $S_1$ is uniquely
determined by either one of the two equations \eqref{isoel}, for $k=1$,
and it is given by
\begin{eqnarray}
S_1=y^{(1)i}\frac{\partial}{\partial x^i} -
\gamma^i_{jk}(x)y^{(1)j}y^{(1)k} \frac{\partial}{\partial
  y^{(1)i}}, \label{gs1} \end{eqnarray}
where $\gamma^i_{jk}(x)$ are the Christoffel symbols of the Riemannian
metric $g_{ij}(x)$.

$F_1$ is a Finsler function of order $1$, its angular metric tensor, given by
formula \eqref{hij}, is related to the Riemannian metric as follows 
\begin{eqnarray}
h^{(1)}_{ij}(x,y^{(1)})=F_1\frac{\partial^2 F_1}{\partial
  y^{(1)i}\partial y^{(1)j}}  = g_{ij}(x)-\frac{\partial F_1}{\partial
  y^{(1)i}}(x,y^{(1)}) \frac{\partial F_1}{\partial y^{(1)j}}(x,y^{(1)}). \label{hgij} \end{eqnarray}  
We have that $\textrm{rank}(h^{(1)}_{ij})=n-1$ and the variational problem
for $F_1$ uniquely determines the projective class of the geodesic
spray $S_1$. Within this projective class, $S_1$ is the only spray
whose geodesics are parameterized by the arc-length of the given
riemannian metric $g$.

On the second order tangent bundle $T^2M$, we consider the locally
defined functions
\begin{eqnarray*}
z^{(2)i}(x, y^{(1)}, y^{(2)})= y^{(2)i}+ \frac{1}{2}\gamma^i_{jk}(x)y^{(1)j}y^{(1)k}.
\label{z2}
\end{eqnarray*}
It follows that $z^{(2)i}$ behave as the components of a vector
field on $M$. These components were interpreted as the covariant
form of acceleration in \cite[(6.5)]{BM09}, as half of the
components of the tension field in \cite{CMOP06}. It follows that the
function $L_2: T^2M \to \mathbb{R}$, given by
\begin{eqnarray*}
L_2(x,y^{(1)}, y^{(2)})=\frac{1}{2}g_{ij}(x) z^{(2)i}(x, y^{(1)},
y^{(2)}) z^{(2)j}(x, y^{(1)}, y^{(2)})=\frac{1}{2}\|z^{(2)}\|^2_g, \label{l2}
\end{eqnarray*} is a second order regular Lagrangian. The Hessian matrix of $L_2$, given by formula \eqref{gij}, is the Riemannian
metric $g_{ij}$. The variational
problem for $L_2$ uniquely determines a semispray of order $3$, $S_3\in
\mathfrak{X}(T^3M)$, whose geodesics are biharmonic curves
\cite{CMOP06}. We call $S_3$ the biharmonic semispray and it is
uniquely determined by either one of the two equivalent equations
\eqref{isoel}. The local coefficients of the biharmonic semispray can
be determined as in \cite[(4.6)]{BCD11}, while the biharmonic equations can
be written as in \cite[(4.8)]{BCD11}. 

For the second order Lagrangian $L_2$, the following homogeneity
properties are true: 
\begin{eqnarray} \mathbb{C}_1(L_2)=4L_2, \quad
  \mathbb{C}_2(L_2)=g_{ij}(x)z^{(2)i}y^{(1)j} =\frac{1}{2}S_1(L_1). \label{c1c2l2} \end{eqnarray} 
Using Proposition \ref{prop:homls} it follows that the biharmonic semispray $S_3$
satisfies the homogeneity condition  $[\mathbb{C}_1,
S_3]=S_3$. However, the biharmonic semispray $S_3$ is not a homogenous
semispray since it does not satisfy the equation \eqref{rhspray} for $\alpha=3$.

Consider the function $F_2:T^2_0M \to \mathbb{R}$, 
\begin{eqnarray} 
F_2=\frac{2 F^2_1 L_2 -  (\mathbb{C}_2(L_2))^2}{F_1^5}=\frac{\|z^{(2)}\|_g^2 \|y^{(1)}\|^2_g
-\left(g_{ij} y^{(1)i} z^{(2)j}\right)^2}{\|y^{(1)}\|^5_g}. \label{f2}\end{eqnarray}
The numerator of the right hand side of the above formula is $\|z^{(2)}\|_g^2 \|y^{(1)}\|^2_g
-\left(g_{ij} y^{(1)i} z^{(2)j}\right)^2 \geq 0$, and hence $F_2\geq 0$ on
$T^2_0M$. Using the homogeneity properties \eqref{c1c2l2} of the
Lagrangian $L_2$, we obtain
that $F_2$ satisfies the Zermelo conditions \eqref{zermelo}, for
$k=2$. Moreover, the tensor \eqref{hij} that corresponds to $F_2$ is
given by 
\begin{eqnarray}
h^{(2)}_{ij}=F^3_2\frac{\partial^2 F_2}{\partial y^{(2)i}\partial
  y^{(2)j}} = 2\left(\frac{F_2}{F_1}\right)^3h^{(1)}_{ij}. \label{h2h1}
\end{eqnarray}
It follows that $\textrm{rank}(h^{(2)}_{ij})=
\textrm{rank}(h^{(1)}_{ij}) = n-1$ and therefore $F_2$ is a Finsler
function of order $2$. Using formulae \eqref{hgij}, \eqref{f2}, and \eqref{h2h1}, it follows that one
can recover the Finsler function of order $2$, $F_2$, from
either one of the angular metrics $h^{(1)}_{ij}$ or $h^{(2)}_{ij}$ as follows
\begin{eqnarray}
\nonumber F_2(x, y^{(1)}, y^{(2)}) &=&
\frac{1}{F^3_1}h^{(1)}_{ij}(x,y^{(1)})z^{(2)i}z^{(2)j} \\
& =& \sqrt[4]{\frac{1}{2} h^{(2)}_{ij}(x,y^{(1)}, y^{(2)})z^{(2)i}z^{(2)j}}. \label{f2f1}\end{eqnarray}
 In the Euclidean context, $F_2$ reduces to the
parametric Lagrangian considered by Crampin and Saunders in
\cite{CS11}. The function $F_2/F_1 \in C^{\infty}(T^2_0M)$, which 
connects the angular metrics of the two Finsler functions, is related
to the first curvature $\kappa$ of a curve. Indeed we have
$F_2/F_1=\kappa^2$. See also formula (39) in \cite{matsyuk11} for $A=0$.

The variational problem for $F_2$ uniquely determines a system of
fourth order differential equations, which is invariant under
orientation preserving reparameterizations. By fixing the parameter to
be the arc-length, the system reduces to the dynamical equation of motion (38) 
studied by Matsyuk \cite{matsyuk11}. In the Euclidean context, a
homogeneous semispray, in the projective class determined by the
variational problem of $F_2$ was obtained in \cite{CS11}.

\begin{acknowledgement*}
The author expresses his thanks to the reviewers for their comments and
suggestions, especially to those regarding the regularity condition for higher order Lagrange
and Finsler functions.

This work has been supported by the Romanian National
Authority for Scientific Research, CNCS UEFISCDI, project number
PN-II-RU-TE-2011-3-0017. 
\end{acknowledgement*}

\end{document}